\newtheorem{theorem}{Theorem}[section]
\newtheorem{lemma}{Lemma}[section]
\newtheorem{proposition}{Proposition}[section]
\newtheorem{remark}{Remark}[section]
\newtheorem{example}{Example}[section]
\newtheorem{corollary}{Corollary}[section]
\newenvironment{proof}[1][Proof]{\noindent\textbf{#1.} }{\ \rule{0.5em}{0.5em}}
\begin{document}

\title{Sensitivity of a nonlinear ordinary BVP\\with fractional Dirichlet-Laplace operator}
\author{\textbf{{\large Dariusz Idczak}}\vspace{1mm}\\\textit{{\small Faculty of Mathematics and Computer Science}}\\\textit{{\small University of Lodz}}\\\textit{{\small 90-238 Lodz, Banacha 22}}\\\textit{{\small Poland}}\\{\small e-mail: idczak@math.uni.lodz.pl}}
\date{{\small \ }}
\maketitle

\begin{abstract}
In the paper, we derive a sensitivity result for a nonlinear fractional
ordinary elliptic system on a bounded interval with Dirichlet boundary
conditions. More precisely, using a global implicit function theorem, we show
that, for any functional parameter, there exists a unique solution to such a
problem and dependence of solutions on functional parameters is continuously differentiable.

\end{abstract}

\noindent\textit{{\footnotesize 2010 Mathematics Subject Classification}}.
{\scriptsize 34B15, 34A08, 34B08, 34L05}.\newline\textit{{\footnotesize Key
words}}. {\scriptsize Fractional Dirichlet-Laplace operator, Stone-von Neumann
operator calculus, global implicit function theorem, Palais-Smale condition}

\section{\textbf{Introduction}}

\setcounter{equation}{0}In our paper, we study a nonlinear ordinary boundary
value problem on the interval $(0,\pi)$, involving a Dirichlet-Laplace
operator $(-\Delta)^{\beta}$ of order $\beta>\frac{1}{2}$,%
\begin{equation}
(-\Delta)^{\beta}x(t)=f(t,x(t),u(t)),\ t\in(0,\pi)\text{ a.e.,} \label{W}%
\end{equation}
where $(-\Delta):H_{0}^{1}\cap H^{2}\rightarrow L^{2}$ is the
Dirichlet-Laplace operator, $f:(0,\pi)\times\mathbb{R}^{m}\times\mathbb{R}%
^{r}\rightarrow\mathbb{R}^{m}$, $x$ is an unknown function and $u$ - a
functional parameter.

Problems involving fractional Laplacians are extensively investigated in
resent years due to their numerous applications, among others in probability,
fluid mechanics, hydrodynamics (see, for example, \cite{Barrios},
\cite{BogBycz2}, \cite{CabreTan}, \cite{Caff}, \cite{Vaz} and references therein).

Definition of the fractional Laplacian adopted in our paper comes from the
Stone-von Neumann operator calculus and is based on the spectral integral
representation theorem for a self-adjoint operator in Hilbert space. It
reduces to a \textit{series}\ form which is taken by other authors as a
definition (\cite{Barrios}, \cite{Bors2}, \cite{CabreTan}). Our more general
approach allows us to obtain useful properties of this fractional operator in
a smart way. This approach has also been used in \cite{Id3}.

In the first part of the paper, we recall some facts from the theory of
spectral integral and Stone-von Neumann operator calculus. Next, we derive
some properties of positive powers of the ordinary Dirichlet-Laplace operator
and their domains (among other, some embedding theorems). In the second part,
we use a global implicit function theorem (\cite{Id}, \cite{Id2}) to prove
existence and uniqueness of a solution to problem (\ref{W}) as well as its
sensitivity. By sensitivity we mean continuous differentiability of the
mapping%
\[
u\longmapsto x_{u}%
\]
where $x_{u}$ is a unique solution to the problem, corresponding to a
parameter $u$. This property can be used in optimal control for system
(\ref{W}).

Similar method but based on a global diffeomorphism theorem (\cite{ISW}) and
applied to a nonlinear integral Hammerstein equation is presented in
\cite{Bors1} with an application to the problem%
\[
\lambda(-\Delta)^{\frac{\sigma}{2}}x(t)+h(t,x(t))=(-\Delta)^{\frac{\sigma}{2}%
}z(t),\ t\in(0,1),
\]
with the exterior Dirichlet boundary condition%
\[
x(t)=0,\ t\in(-\infty,-1]\cup\lbrack1,\infty).
\]
In \cite{Bors2}, the problem of type (\ref{W}) on a bounded Lipschitzian
domain $\Omega\subset\mathbb{R}^{n}$ ($n\geq2$) and with an exterior Dirichlet
boundary condition, is studied. Continuous dependence of solutions on
parameters (stability) is investigated therein.

In \cite{Id3}, using a variational method, we derive an existence result for
the so-called bipolynomial fractional Dirichlet-Laplace problem%
\[%
{\displaystyle\sum\nolimits_{i,j=0}^{k}}
\alpha_{i}\alpha_{j}[(-\Delta)_{\omega}]^{\beta_{i}+\beta_{j}}u(x)=D_{u}%
F(x,u(x)),\ x\in\Omega\text{ a.e.},
\]
where $\alpha_{i}>0$ for $i=0,...,k$ ($k\in\mathbb{N}\cup\{0\}$) and
$0\leq\beta_{0}<\beta_{1}<...<\beta_{k}$, $(-\Delta)_{\omega}:D((-\Delta
)_{\omega})\subset L^{2}\rightarrow L^{2}$ is a weak Dirichlet-Laplace
operator, $\Omega\subset\mathbb{R}^{N}$ is a bounded open set, $F:\Omega
\times\mathbb{R}\rightarrow\mathbb{R}$, $D_{u}F$ is the partial derivative of
$F$ with respect to $u$.

\section{Integral representation of a self-adjoint operator}

Results presented in this section can be found, in the case of complex Hilbert
space, for example in \cite{Alex}, \cite{Mlak}. Their proofs can be moved
without any or with small changes to the case of real Hilbert space. In the
next, we shall deal only with real Hilbert spaces. Such a preliminary section
has also been included in the paper \cite{Id3}.

Let $H$ be a real Hilbert space with a scalar product $\left\langle
\cdot,\cdot\right\rangle :H\times H\rightarrow\mathbb{R}$. Let us denote by
$\Pi(H)$ the set of all projections of $H$ on closed linear subspaces and by
$\mathcal{B}$ - the $\sigma$-algebra of Borel subsets of $\mathbb{R}$. By the
spectral measure in $\mathbb{R}$ we mean a set function $E:\mathcal{B}%
\rightarrow\Pi(H)$ that satisfies the following conditions:

\begin{itemize}
\item[$\cdot$] for any $x\in H$, the function%
\begin{equation}
\mathcal{B}\ni P\longmapsto E(P)x\in H \label{vm}%
\end{equation}
is a vector measure

\item[$\cdot$] $E(\mathbb{R})=I$

\item[$\cdot$] $E(P\cap Q)=E(P)\circ E(Q)$ for $P,Q\in\mathcal{B}$.
\end{itemize}

By a support of a spectral measure $E$ we mean the complement of the sum of
all open subsets of $\mathbb{R}$ with zero spectral measure.

If $b:\mathbb{R}\rightarrow\mathbb{R}$ is a bounded Borel measurable function,
defined $E$ - a.e., then the integral $%
{\displaystyle\int\nolimits_{-\infty}^{\infty}}
b(\lambda)E(d\lambda)$ is defined by%
\[
(%
{\displaystyle\int\nolimits_{-\infty}^{\infty}}
b(\lambda)E(d\lambda))x=%
{\displaystyle\int\nolimits_{-\infty}^{\infty}}
b(\lambda)E(d\lambda)x
\]
for any $x\in H$ where the integral $%
{\displaystyle\int\nolimits_{-\infty}^{\infty}}
b(\lambda)E(d\lambda)x$ (with respect to the vector measure) is defined in a
standard way, with the aid of the sequence of simple functions converging
$E(d\lambda)x$ - a.e. to $b$ (cf. \cite{Alex}).

If $b:\mathbb{R}\rightarrow\mathbb{R}$ is an unbounded Borel measurable
function, defined $E$ - a.e., then, for any $x\in H$ such that
\begin{equation}%
{\displaystyle\int\nolimits_{-\infty}^{\infty}}
\left\vert b(\lambda)\right\vert ^{2}\left\Vert E(d\lambda)x\right\Vert
^{2}<\infty\label{fm}%
\end{equation}
(the above integral is taken with respect to the nonnegative measure
$\mathcal{B\ni}P\longmapsto\left\Vert E(P)x\right\Vert ^{2}\in\mathbb{R}%
_{0}^{+}$), there exists the limit%
\[
\lim%
{\displaystyle\int\nolimits_{-\infty}^{\infty}}
b_{n}(\lambda)E(d\lambda)x
\]
of integrals (with respect to the vector measure (\ref{vm})) where
\[
b_{n}:\mathbb{R\ni\lambda\longmapsto}\left\{
\begin{array}
[c]{ccc}%
b(\lambda) & \text{as} & \left\vert b(\lambda)\right\vert \leq n\\
0 & \text{as} & \left\vert b(\lambda)\right\vert >n
\end{array}
\right.  .
\]
Let us denote the set of all points $x$ with property (\ref{fm}) by $D$. One
proves that $D$ is dense linear subspace of $H$ and by $%
{\displaystyle\int\nolimits_{-\infty}^{\infty}}
b(\lambda)E(d\lambda)$ one denotes the operator%
\[%
{\displaystyle\int\nolimits_{-\infty}^{\infty}}
b(\lambda)E(d\lambda):D\subset H\rightarrow H
\]
given by%
\[
(%
{\displaystyle\int\nolimits_{-\infty}^{\infty}}
b(\lambda)E(d\lambda))x=\lim%
{\displaystyle\int\nolimits_{-\infty}^{\infty}}
b_{n}(\lambda)E(d\lambda)x.
\]
Of course, $D=H$ and
\[
\lim%
{\displaystyle\int\nolimits_{-\infty}^{\infty}}
b_{n}(\lambda)E(d\lambda)x=%
{\displaystyle\int\nolimits_{-\infty}^{\infty}}
b(\lambda)E(d\lambda)x
\]
when $b:\mathbb{R}\rightarrow\mathbb{R}$ is a bounded Borel measurable
function, defined $E$ - a.e.

For $x\in D$, we have%
\[
\left\Vert (%
{\displaystyle\int\nolimits_{-\infty}^{\infty}}
b(\lambda)E(d\lambda))x\right\Vert ^{2}=%
{\displaystyle\int\nolimits_{-\infty}^{\infty}}
\left\vert b(\lambda)\right\vert ^{2}\left\Vert E(d\lambda)x\right\Vert ^{2}.
\]
Moreover,%
\begin{equation}
(%
{\displaystyle\int\nolimits_{-\infty}^{\infty}}
b(\lambda)E(d\lambda))^{\ast}=%
{\displaystyle\int\nolimits_{-\infty}^{\infty}}
b(\lambda)E(d\lambda), \label{samosp}%
\end{equation}
i.e. the operator $%
{\displaystyle\int\nolimits_{-\infty}^{\infty}}
b(\lambda)E(d\lambda)$ is self-adjoint.

\begin{remark}
\label{sense}To integrate a Borel measurable function $b:B\rightarrow
\mathbb{R}$ where $B$ is a Borel set containing the support of the measure
$E$, it is sufficient to extend $b$ on $\mathbb{R}$ to a whichever Borel
measurable function (putting, for example, $b(\lambda)=0$ for $\lambda\notin
B$).
\end{remark}

If $b:\mathbb{R}\rightarrow\mathbb{R}$ is Borel measurable and $\sigma
\in\mathcal{B}$, then by the integral
\[%
{\displaystyle\int\nolimits_{\sigma}}
b(\lambda)E(d\lambda)
\]
we mean the integral
\[%
{\displaystyle\int\nolimits_{-\infty}^{\infty}}
\chi_{\sigma}(\lambda)b(\lambda)E(d\lambda)
\]
where $\chi_{\sigma}$ is the characteristic function of the set $\sigma$
(\footnote{Integral $%
{\displaystyle\int\nolimits_{\sigma}}
b(\lambda)E(d\lambda)$ can be also defined with the aid of the restriction of
$E$ to the set $\sigma$}).

Next theorem plays the fundamental role in the spectral theory of self-adjoint operators.

\begin{theorem}
\label{main}If $A:D(A)\subset H\rightarrow H$ is self-adjoint and the
resolvent set $\rho(A)$ is non-empty, then there exists a unique spectral
measure $E$ with the closed support $\Lambda=\sigma(A)$, such that%
\[
A=%
{\displaystyle\int\nolimits_{-\infty}^{\infty}}
\lambda E(d\lambda)=%
{\displaystyle\int\nolimits_{\sigma(A)}}
\lambda E(d\lambda).
\]

\end{theorem}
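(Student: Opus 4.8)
The plan is to reduce the unbounded problem to the classical spectral theorem for \emph{bounded} self-adjoint operators (see \cite{Alex}, \cite{Mlak}) by inverting $A$ at a resolvent point. Since we work over a real Hilbert space, the resolvent parameter ranges over $\mathbb{R}$, so any $\lambda_{0}\in\rho(A)$ is real and $R:=(A-\lambda_{0}I)^{-1}$ is a bounded, everywhere-defined operator which, as the inverse of the self-adjoint operator $A-\lambda_{0}I$, is itself self-adjoint; being an inverse, $R$ is moreover injective.

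First I would apply the bounded spectral theorem to $R$, obtaining a unique spectral measure $F$ with compact support $\sigma(R)\subset\mathbb{R}$ and $R=\int_{-\infty}^{\infty}\mu\,F(d\mu)$. Injectivity of $R$ forces $F(\{0\})=0$, and this is the crucial point: it allows me to discard the value $\mu=0$ and push $F$ forward under the homeomorphism $\phi:\mathbb{R}\setminus\{0\}\to\mathbb{R}\setminus\{\lambda_{0}\}$, $\phi(\mu)=\lambda_{0}+\mu^{-1}$, which by the spectral mapping theorem for resolvents carries $\sigma(R)\setminus\{0\}$ bijectively onto $\sigma(A)$. I then define the candidate spectral measure by $E(P):=F(\phi^{-1}(P))$ for Borel $P\subseteq\mathbb{R}$.

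Next I would verify that $E$ is a spectral measure with the asserted properties. Projection-valuedness, the normalization $E(\mathbb{R})=I$, countable additivity of $P\mapsto E(P)x$, and multiplicativity $E(P\cap Q)=E(P)\circ E(Q)$ all transfer directly from the corresponding properties of $F$, since $\phi$ is a Borel isomorphism and $F(\{0\})=0$. The support identity $\Lambda=\sigma(A)$ follows from $\phi(\sigma(R)\setminus\{0\})=\sigma(A)$ together with closedness of $\sigma(A)$. For the representation itself I would use the change-of-variables formula for integrals against a pushed-forward spectral measure: for $x$ in the domain $D$ singled out by condition (\ref{fm}) applied to $b(\lambda)=\lambda$, one has $\int\lambda\,E(d\lambda)x=\int(\lambda_{0}+\mu^{-1})F(d\mu)x=\lambda_{0}x+(\int\mu^{-1}F(d\mu))x$. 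Because $F(\{0\})=0$, the operator $\int\mu^{-1}F(d\mu)$ is precisely the inverse of $R=\int\mu\,F(d\mu)$, namely $A-\lambda_{0}I$; hence the right-hand side equals $\lambda_{0}x+(A-\lambda_{0}I)x=Ax$, and a check that both domains coincide with the range of $R$ completes existence.

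Finally, uniqueness would follow by reversing the construction: any spectral measure $E'$ representing $A$ yields, via the bounded Borel function $\lambda\mapsto(\lambda-\lambda_{0})^{-1}$, a spectral representation of $R$, whence uniqueness in the bounded case forces the pushforward of $E'$ to agree with $F$ and therefore $E'=E$. I expect the main obstacle to be the bookkeeping around the excluded point $\mu=0$ --- verifying $F(\{0\})=0$ and confirming that the unbounded integral $\int\lambda\,E(d\lambda)$ has domain exactly $D(A)$ --- rather than the routine transfer of the algebraic spectral-measure axioms.
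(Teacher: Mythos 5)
The paper does not actually prove Theorem \ref{main}: it is quoted as a classical result, with the remark at the start of Section 2 that the proofs in \cite{Alex} and \cite{Mlak} (stated there for complex Hilbert spaces) carry over to the real case. So there is no in-paper argument to compare against; what you have written is the standard Riesz--Nagy reduction of the unbounded spectral theorem to the bounded one via the resolvent, and as an outline it is sound. It is worth noting that your route is particularly well adapted to the real-scalar setting of this paper: the usual complex-variable shortcut (the Cayley transform $\,(A-i)(A+i)^{-1}$) is unavailable over $\mathbb{R}$, and the hypothesis $\rho(A)\neq\emptyset$ --- which is automatic for self-adjoint operators on a \emph{complex} Hilbert space but must be assumed here --- is consumed exactly where you use it, namely to produce a real $\lambda_{0}$ with $R=(A-\lambda_{0}I)^{-1}$ bounded, everywhere defined and self-adjoint. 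The two delicate points are the ones you already flagged: (i) $F(\{0\})=0$, which holds because $F(\{0\})$ is the projection onto $\ker R$ and $R$ is injective, and which is what legitimizes the pushforward under $\phi(\mu)=\lambda_{0}+\mu^{-1}$; and (ii) the identification of $\int\mu^{-1}F(d\mu)$ with $R^{-1}=A-\lambda_{0}I$ \emph{including domains}, which is an instance of the composition rule of Proposition \ref{zloz} applied to $b(\mu)=\mu^{-1}$, $d(\mu)=\mu$ (so that $(b\cdot d)(R)=I$ off the $F$-null set $\{0\}$), together with the verification that the set $D$ defined by condition (\ref{fm}) for $b(\mu)=\mu^{-1}$ is precisely the range of $R$, i.e. $D(A)$. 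In the uniqueness step you should also record that any competing $E'$ with support $\sigma(A)$ satisfies $E'(\{\lambda_{0}\})=0$, so that the map $\lambda\mapsto(\lambda-\lambda_{0})^{-1}$ is an $E'$-a.e.\ defined Borel isomorphism and the pushforward really does determine $E'$. With those details written out, the argument is complete and consistent with the sources the paper cites.
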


The basic notion in the Stone-von Neumann operator calculus is a function of a
self-adjoint operator. Namely, if $A:D(A)\subset H\rightarrow H$ is
self-adjoint and $E$ is the spectral measure determined according to the above
theorem, then, for any Borel measurable function $b:\mathbb{R}\rightarrow
\mathbb{R}$, one defines the operator $b(A)$ by%
\[
b(A)=%
{\displaystyle\int\nolimits_{-\infty}^{\infty}}
b(\lambda)E(d\lambda)=%
{\displaystyle\int\nolimits_{\sigma(A)}}
b(\lambda)E(d\lambda).
\]
It is known that the spectrum $\sigma(b(A))$ of $b(A)$ is given by%
\begin{equation}
\sigma(b(A))=\overline{b(\sigma(A))} \label{ow}%
\end{equation}
provided that $b$ is continuous (it is sufficient to assume that $b$ is
continuous on $\sigma(A)$).

We have the following general result.

\begin{proposition}
\label{zloz}If $b,d:\mathbb{R}\rightarrow\mathbb{R}$ are Borel measurable
functions and $E$ is the spectral measure for a self-adjoint operator
$A:D(A)\subset H\rightarrow H$ with non-empty resolvent set, then
\[
(b\cdot d)(A)\supset b(A)\circ d(A)
\]
and%
\begin{equation}
(b\cdot d)(A)=b(A)\circ d(A) \label{rowbd}%
\end{equation}
if and only if%
\[
D((b\cdot d)(A))\subset D(d(A)).
\]

\end{proposition}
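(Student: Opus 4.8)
The plan is to work through the spectral representation directly, using the fact that all the relevant operators are defined as spectral integrals against the common spectral measure $E$. The key computational tool will be the isometry formula
\[
\left\Vert \left({\displaystyle\int\nolimits_{-\infty}^{\infty}}g(\lambda)E(d\lambda)\right)x\right\Vert^{2}={\displaystyle\int\nolimits_{-\infty}^{\infty}}\left\vert g(\lambda)\right\vert^{2}\left\Vert E(d\lambda)x\right\Vert^{2},
\]
together with the characterization of the domain $D(g(A))$ as the set of $x$ for which the right-hand side is finite. First I would establish the inclusion $(b\cdot d)(A)\supset b(A)\circ d(A)$ by fixing $x\in D(b(A)\circ d(A))$, i.e. $x\in D(d(A))$ with $y:=d(A)x\in D(b(A))$, and showing that $x\in D((b\cdot d)(A))$ with $(b\cdot d)(A)x=b(A)y$. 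The crucial point is that the vector measure $E(\cdot)y$ associated with $y=d(A)x$ satisfies $\left\Vert E(d\lambda)y\right\Vert^{2}=\left\vert d(\lambda)\right\vert^{2}\left\Vert E(d\lambda)x\right\Vert^{2}$, which lets me rewrite the finiteness condition $\int\left\vert b\right\vert^{2}\left\Vert E(d\lambda)y\right\Vert^{2}<\infty$ as $\int\left\vert b\cdot d\right\vert^{2}\left\Vert E(d\lambda)x\right\Vert^{2}<\infty$, exactly the membership condition for $D((b\cdot d)(A))$.

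Once the inclusion is in hand, the equivalence is largely formal. Two operators with one extending the other are equal precisely when their domains coincide, so (\ref{rowbd}) holds iff $D((b\cdot d)(A))\subset D(b(A)\circ d(A))$. The plan is therefore to show that the stated condition $D((b\cdot d)(A))\subset D(d(A))$ is equivalent to this domain inclusion. The forward direction is the content to verify: given $x\in D((b\cdot d)(A))$, the hypothesis gives $x\in D(d(A))$, so $y=d(A)x$ is defined, and I must check $y\in D(b(A))$, i.e. $\int\left\vert b\right\vert^{2}\left\Vert E(d\lambda)y\right\Vert^{2}<\infty$; but by the measure identity above this integral equals $\int\left\vert b\cdot d\right\vert^{2}\left\Vert E(d\lambda)x\right\Vert^{2}$, which is finite exactly because $x\in D((b\cdot d)(A))$. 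Thus $x\in D(b(A)\circ d(A))$. The reverse direction is immediate, since $D(b(A)\circ d(A))\subset D(d(A))$ by definition of composition, and together with the already-established inclusion $D(b(A)\circ d(A))\subset D((b\cdot d)(A))$ one gets the claimed equivalence.

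The step I expect to be the main obstacle is justifying the measure identity $\left\Vert E(d\lambda)(d(A)x)\right\Vert^{2}=\left\vert d(\lambda)\right\vert^{2}\left\Vert E(d\lambda)x\right\Vert^{2}$ rigorously at the level of the scalar measures $P\mapsto\left\Vert E(P)x\right\Vert^{2}$, rather than merely formally. This rests on the multiplicativity $E(P)\circ E(Q)=E(P\cap Q)$ of the spectral measure and on commutation of $E(P)$ with the spectral integral $d(A)$, which one obtains by approximating $d$ by the truncations $d_{n}$ and passing to the limit, controlling the convergence via the isometry formula applied to $d-d_{n}$. I would handle the unbounded case by this truncation argument, first verifying the identity for the bounded simple functions where it is a direct computation from $E(P)E(Q)=E(P\cap Q)$, then extending by the limiting procedure defining the integral for unbounded $b$. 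Everything else is bookkeeping with domains and the isometry, so I would keep those parts brief.
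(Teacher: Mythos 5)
The paper does not prove Proposition \ref{zloz}: it is quoted as a known fact of the Stone--von Neumann calculus, with the reader referred to \cite{Alex} and \cite{Mlak} (the whole of Section 2 is imported from the literature in this way). So there is no in-paper argument to compare against; judged on its own, your proof is correct and is essentially the standard textbook argument (cf.\ Rudin's treatment of the multiplication rule for the symbolic calculus). The load-bearing step is exactly the one you single out, namely the change-of-measure identity
\[
\left\Vert E(P)\,d(A)x\right\Vert^{2}={\displaystyle\int\nolimits_{P}}\left\vert d(\lambda)\right\vert^{2}\left\Vert E(d\lambda)x\right\Vert^{2},\qquad x\in D(d(A)),
\]
which follows from $E(P)\circ d_{n}(A)=(\chi_{P}d_{n})(A)$ for the truncations and passage to the limit controlled by the isometry applied to $d-d_{n}$; once this is in place, both the domain bookkeeping for the inclusion $(b\cdot d)(A)\supset b(A)\circ d(A)$ and the reduction of the equality (\ref{rowbd}) to the single condition $D((b\cdot d)(A))\subset D(d(A))$ go through as you describe. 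One small point worth writing out rather than asserting: the identity $(b\cdot d)(A)x=b(A)\,d(A)x$ for $x$ in the composite domain needs its own truncation argument (first in $b$, then in $d$), not just the equality of the finiteness conditions; your plan accommodates this but the sentence ``with $(b\cdot d)(A)x=b(A)y$'' glosses over it.
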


Using the above proposition one can deduce that, for any $n\in\mathbb{N}$,
$n\geq2$, and a Borel measurable function $b:\mathbb{R}\rightarrow\mathbb{R}$,%
\begin{equation}
(b(A))^{n}=b^{n}(A). \label{formulan}%
\end{equation}
When $b(\lambda)=\lambda$, (\ref{formulan}) gives%
\begin{equation}
A^{n}=%
{\displaystyle\int\nolimits_{-\infty}^{\infty}}
\lambda^{n}E(d\lambda). \label{formulann}%
\end{equation}
If $n=1$, then (\ref{formulann}) follows from Theorem \ref{main}. Since
$E(\mathbb{R})=I$, therefore the identity operator $I$ can be written as%
\[
I=%
{\displaystyle\int\nolimits_{-\infty}^{\infty}}
1E(d\lambda).
\]
If $\beta>0$, then formula (\ref{formulan}) with
\[
b:\mathbb{R}\ni\lambda\rightarrow\left\{
\begin{array}
[c]{ccc}%
0 & ; & \lambda<0\\
\lambda^{\frac{\beta}{2}} & ; & \lambda\geq0
\end{array}
\right.
\]
and $n=2$ implies the following proposition (cf. Remark \ref{sense}).

\begin{proposition}
\label{zlozpot}If $\sigma(A)\subset\lbrack0,\infty)$, then%
\begin{equation}
A^{\frac{\beta}{2}}\circ A^{\frac{\beta}{2}}=A^{\beta}. \label{beta1beta2}%
\end{equation}

\end{proposition}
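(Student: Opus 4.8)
The plan is to apply formula (\ref{formulan}) with $n=2$ to the particular function $b$ displayed just before the statement, and then to identify both sides with the desired operators, using the hypothesis $\sigma(A)\subset[0,\infty)$ together with Remark \ref{sense}.

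First I would note that, since $b(\lambda)=\lambda^{\beta/2}$ for $\lambda\geq 0$ and the support $\sigma(A)$ of the spectral measure $E$ lies in $[0,\infty)$, the operator $b(A)=\int_{-\infty}^{\infty}b(\lambda)E(d\lambda)$ is nothing but the fractional power $A^{\beta/2}$. The extension of $b$ by $0$ on $(-\infty,0)$ is immaterial here, precisely because that half-line carries no spectral mass; this is the content of Remark \ref{sense}. I would then compute the pointwise square: $b^{2}(\lambda)=(\lambda^{\beta/2})^{2}=\lambda^{\beta}$ for $\lambda\geq 0$ and $b^{2}(\lambda)=0$ for $\lambda<0$, so that $b^{2}$ is exactly the function defining $A^{\beta}$ on $\sigma(A)$, whence $b^{2}(A)=A^{\beta}$, again by Remark \ref{sense}. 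Feeding these identifications into $(b(A))^{2}=b^{2}(A)$, which is (\ref{formulan}) with $n=2$, yields $A^{\beta/2}\circ A^{\beta/2}=A^{\beta}$, i.e. (\ref{beta1beta2}).

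The only point that requires genuine checking is the equality case of Proposition \ref{zloz} underlying (\ref{formulan}): to pass from the general inclusion $(b\cdot b)(A)\supset b(A)\circ b(A)$ to equality, one must verify $D(b^{2}(A))\subset D(b(A))$, that is $D(A^{\beta})\subset D(A^{\beta/2})$. I expect this to be the main (and in fact essentially the only) obstacle, but it is resolved directly from the defining integrals: on $[0,\infty)$ one has $\lambda^{\beta}\leq 1+\lambda^{2\beta}$, so that $\int_{-\infty}^{\infty}\lambda^{\beta}\|E(d\lambda)x\|^{2}\leq\|x\|^{2}+\int_{-\infty}^{\infty}\lambda^{2\beta}\|E(d\lambda)x\|^{2}$, and the right-hand side is finite for every $x\in D(A^{\beta})$. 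Hence the inclusion holds automatically, formula (\ref{formulan}) applies, and the bookkeeping above completes the argument.
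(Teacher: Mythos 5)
Your argument is correct and follows exactly the route the paper itself indicates: the paper derives the proposition by applying (\ref{formulan}) with $n=2$ to the displayed function $b$ and invoking Remark \ref{sense}, offering no further proof. Your explicit verification of the domain inclusion $D(A^{\beta})\subset D(A^{\beta/2})$ via $\lambda^{\beta}\leq 1+\lambda^{2\beta}$ is a welcome filling-in of a step the paper leaves implicit inside the unproved formula (\ref{formulan}), but it does not change the approach.
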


\section{Fractional Dirichlet-Laplace operator}

Consider the one-dimensional Dirichlet-Laplace operator on the interval
$(0,\pi)$
\[
-\Delta:H_{0}^{1}\cap H^{2}\subset L^{2}\rightarrow L^{2}%
\]
given by
\[
-\Delta x(t)=-x^{^{\prime\prime}}(t)
\]
where $H_{0}^{1}=H_{0}^{1}((0,\pi),\mathbb{R}^{m})$, $H^{2}=H^{2}%
((0,\pi),\mathbb{R}^{m})$ are classical Sobolev spaces and $L^{2}=L^{2}%
((0,\pi),\mathbb{R}^{m})$. In an elementary way, one can check that this
operator is self-adjoint,
\[
\sigma(-\Delta)=\sigma_{p}(-\Delta)=\{j^{2};\ j\in\mathbb{N}\}
\]
($\sigma_{p}(-\Delta)$ is the pointwise spectrum of $(-\Delta)$) and the
eigenspace $N(j^{2})$ corresponding to the eigenvalue $\lambda_{j}=j^{2}$ is
the set $\{c\sin jt;\ c\in\mathbb{R}^{m}\}$. The system of functions%
\[
e_{j,i}=(0,...,0,\underset{i-th}{\underbrace{\sqrt{\frac{2}{\pi}}\sin jt}%
},0,...,0),\ j=1,2,...,\ i=1,...,m,
\]
is the hilbertian basis (complete ortonormal system) in $L^{2}$.

Now, let us fix any $\beta>0$ and consider the operator%
\[
(-\Delta)^{\beta}:D((-\Delta)^{\beta})\subset L^{2}\rightarrow L^{2}%
\]
where%
\begin{align}
D((-\Delta)^{\beta})  &  =\{x(t)\in L^{2};\ \nonumber\\%
{\displaystyle\int\nolimits_{\sigma(-\Delta)}}
\left\vert \lambda^{\beta}\right\vert ^{2}\left\Vert E(d\lambda)x\right\Vert
^{2}  &  =%
{\displaystyle\sum\nolimits_{j=1}^{\infty}}
((j^{2})^{\beta})^{2}\left\vert a_{j}\right\vert ^{2}<\infty\\
\text{where }x(t)  &  =(%
{\displaystyle\int\nolimits_{\sigma(-\Delta)}}
1E(d\lambda)x)(t)=%
{\displaystyle\sum\nolimits_{j=1}^{\infty}}
a_{j}\sqrt{\frac{2}{\pi}}\sin jt\}\nonumber
\end{align}
(here $E$ is the spectral measure given by Theorem \ref{main} for the operator
$(-\Delta)$, $a_{j}\sqrt{\frac{2}{\pi}}\sin jt$ is the projection of $x$ on
the $m$-dimensional eigenspace $N(j^{2})$ of the operator $(-\Delta)$ and%
\begin{multline*}
(-\Delta)^{\beta}x(t)=((%
{\displaystyle\int\nolimits_{\sigma(-\Delta)}}
\lambda^{\beta}E(d\lambda))x)(t)=(\lim%
{\displaystyle\int\nolimits_{\sigma(-\Delta)}}
(\lambda^{\beta})_{n}E(d\lambda)x)(t)\\
=%
{\displaystyle\sum\nolimits_{j=1}^{\infty}}
(j^{2})^{\beta}a_{j}\sqrt{\frac{2}{\pi}}\sin jt
\end{multline*}
for $x(t)=%
{\displaystyle\sum\nolimits_{j=1}^{\infty}}
a_{j}\sqrt{\frac{2}{\pi}}\sin jt\in D((-\Delta)^{\beta})$ (\footnote{The
series is meant in $L^{2}$ but from the Carleson theorem it follows that
$x(t)=%
{\displaystyle\sum\nolimits_{j=1}^{\infty}}
a_{j}\sqrt{\frac{2}{\pi}}\sin jt$ a.e. on $(0,\pi)$ (cf. \cite[Theorem
5.17]{Brezis}).}).

Equality (\ref{ow}) and the fact that isolated points of the spectrum of a
self-adjoint operator are the eigenvalues imply that
\[
\sigma((-\Delta)^{\beta})=\sigma_{p}((-\Delta)^{\beta})=\{(j^{2})^{\beta
};\ j\in\mathbb{N}\}.
\]
The corresponding eigenspaces for $(-\Delta)$ and $(-\Delta)^{\beta}$ are the
same (it follows from a general result concerning the power of any
self-adjoint operator).

The operator $(-\Delta)^{\beta}$ will be called \textit{the Dirichlet-Laplace
operator of order }$\beta$, and the function $(-\Delta)^{\beta}x$ -
\textit{the Dirichlet-Laplacian of order }$\beta$\textit{ of} $x$.

We also have

\begin{lemma}
$D((-\Delta)^{\beta})$ with the scalar product%
\[
\left\langle x,y\right\rangle _{\beta}=\left\langle x,y\right\rangle _{L^{2}%
}+\left\langle (-\Delta)^{\beta}x,(-\Delta)^{\beta}y\right\rangle _{L^{2}}%
\]
is the Hilbert space.
\end{lemma}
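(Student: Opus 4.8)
The plan is to verify first that $\left\langle \cdot,\cdot\right\rangle _{\beta}$ is a genuine inner product and then to establish completeness, which is the substantive part. Bilinearity and symmetry of $\left\langle \cdot,\cdot\right\rangle _{\beta}$ are inherited immediately from those of $\left\langle \cdot,\cdot\right\rangle _{L^{2}}$, while positive definiteness follows from the bound $\left\langle x,x\right\rangle _{\beta}\geq\left\Vert x\right\Vert _{L^{2}}^{2}$, so that $\left\langle x,x\right\rangle _{\beta}=0$ forces $x=0$.

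For completeness, I would take a sequence $(x_{n})$ that is Cauchy in the norm $\left\Vert x\right\Vert _{\beta}=(\left\langle x,x\right\rangle _{\beta})^{1/2}$. Since $\left\Vert x_{n}-x_{m}\right\Vert _{L^{2}}\leq\left\Vert x_{n}-x_{m}\right\Vert _{\beta}$ and $\left\Vert (-\Delta)^{\beta}x_{n}-(-\Delta)^{\beta}x_{m}\right\Vert _{L^{2}}\leq\left\Vert x_{n}-x_{m}\right\Vert _{\beta}$, both $(x_{n})$ and $((-\Delta)^{\beta}x_{n})$ are Cauchy in $L^{2}$; by completeness of $L^{2}$ there exist $x,y\in L^{2}$ with $x_{n}\rightarrow x$ and $(-\Delta)^{\beta}x_{n}\rightarrow y$ in $L^{2}$. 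The whole proof then reduces to showing that $x\in D((-\Delta)^{\beta})$ and $(-\Delta)^{\beta}x=y$: once this is known, $\left\Vert x_{n}-x\right\Vert _{\beta}^{2}=\left\Vert x_{n}-x\right\Vert _{L^{2}}^{2}+\left\Vert (-\Delta)^{\beta}x_{n}-y\right\Vert _{L^{2}}^{2}\rightarrow0$, so $x_{n}\rightarrow x$ in $D((-\Delta)^{\beta})$. In other words, completeness is exactly the statement that $(-\Delta)^{\beta}$ is a closed operator and that $\left\Vert \cdot\right\Vert _{\beta}$ is its graph norm.

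I would prove this closedness using the series representation established above. Writing $x_{n}=\sum_{j}a_{j}^{(n)}\sqrt{\frac{2}{\pi}}\sin jt$ and $x=\sum_{j}a_{j}\sqrt{\frac{2}{\pi}}\sin jt$, convergence $x_{n}\rightarrow x$ in $L^{2}$ gives, by continuity of the coefficient functionals, $a_{j}^{(n)}\rightarrow a_{j}$ for every $j$; likewise, since $(-\Delta)^{\beta}x_{n}=\sum_{j}(j^{2})^{\beta}a_{j}^{(n)}\sqrt{\frac{2}{\pi}}\sin jt$, writing $y=\sum_{j}b_{j}\sqrt{\frac{2}{\pi}}\sin jt$ yields $(j^{2})^{\beta}a_{j}^{(n)}\rightarrow b_{j}$ for every $j$. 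Letting $n\rightarrow\infty$ and using uniqueness of limits we obtain $b_{j}=(j^{2})^{\beta}a_{j}$ for all $j$. Since $y\in L^{2}$, we have $\sum_{j}\left\vert b_{j}\right\vert ^{2}=\sum_{j}((j^{2})^{\beta})^{2}\left\vert a_{j}\right\vert ^{2}<\infty$, which is precisely the defining condition for $x\in D((-\Delta)^{\beta})$, and then $(-\Delta)^{\beta}x=\sum_{j}(j^{2})^{\beta}a_{j}\sqrt{\frac{2}{\pi}}\sin jt=y$, as required.

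The main obstacle is this last closedness step; everything else is routine. It is worth noting that it can also be obtained abstractly without the Fourier computation: by (\ref{samosp}) the operator $(-\Delta)^{\beta}=\int_{\sigma(-\Delta)}\lambda^{\beta}E(d\lambda)$ is self-adjoint, and every self-adjoint operator, being an adjoint, is closed; the domain of a closed operator endowed with the graph inner product is always a Hilbert space. The concrete argument via coefficients has the advantage of staying entirely within the series description just introduced.
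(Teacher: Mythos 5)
Your proof is correct. The paper's own proof is a single line: it invokes the fact that $(-\Delta)^{\beta}$ is self-adjoint (by (\ref{samosp})) and hence closed, so that its domain with the graph inner product is complete --- which is precisely the ``abstract'' alternative you mention in your final paragraph. Your main argument instead establishes closedness by hand through the Fourier coefficients: extracting $a_{j}^{(n)}\rightarrow a_{j}$ and $(j^{2})^{\beta}a_{j}^{(n)}\rightarrow b_{j}$ from the two $L^{2}$-convergences, concluding $b_{j}=(j^{2})^{\beta}a_{j}$, and reading off membership in $D((-\Delta)^{\beta})$ from $\sum_{j}\vert b_{j}\vert^{2}<\infty$. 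This is sound (the only thing to keep in mind is that the $a_{j}$ are vectors in $\mathbb{R}^{m}$, which changes nothing), and it buys self-containedness: it uses only the series description of the domain given just above the lemma, at the cost of being longer. The paper's route buys brevity and generality --- it works verbatim for $b(A)$ for any self-adjoint $A$ and Borel $b$, with no spectral decomposition into a discrete basis needed --- but leans on the operator-theoretic facts from Section 2. Since you explicitly identify both arguments and reduce completeness to closedness plus the observation that $\Vert\cdot\Vert_{\beta}$ is the graph norm, your write-up subsumes the paper's proof.
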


\begin{proof}
The assertion follows from the fact that the operator $(-\Delta)^{\beta}$
being self-adjoint (cf. (\ref{samosp})) is closed.
\end{proof}

The scalar product $\left\langle \cdot,\cdot\right\rangle _{\beta}$ and the
scalar product%
\[
\left\langle x,y\right\rangle _{\thicksim\beta}=\left\langle (-\Delta)^{\beta
}x,(-\Delta)^{\beta}y\right\rangle _{L^{2}}%
\]
generate equivalent norms in $D((-\Delta)^{\beta})$. Indeed, it is sufficient
to observe that the following Poincare inequality holds true:
\begin{equation}
\left\Vert x\right\Vert _{L^{2}}^{2}=%
{\displaystyle\sum\nolimits_{j=1}^{\infty}}
a_{j}^{2}\leq%
{\displaystyle\sum\nolimits_{j=1}^{\infty}}
((j^{2})^{\beta})^{2}a_{j}^{2}=\left\Vert (-\Delta)^{\beta}x\right\Vert
_{L^{2}}^{2}=\left\Vert x\right\Vert _{\thicksim\beta}^{2} \label{Poincare}%
\end{equation}
for any $x(t)=%
{\displaystyle\sum\nolimits_{j=1}^{\infty}}
a_{j}\sqrt{\frac{2}{\pi}}\sin jt\in D((-\Delta)^{\beta})$. In the next, we
shall consider $D((-\Delta)^{\beta})$ with the norm $\left\Vert \cdot
\right\Vert _{\thicksim\beta}$.

\subsection{Embeddings}

From the description of the domain $D((-\Delta)^{\beta})$ it follows that%
\begin{equation}
D((-\Delta)^{\beta_{2}})\subset D((-\Delta)^{\beta_{1}}) \label{b1b2}%
\end{equation}
for any $0<\beta_{1}<\beta_{2}$. Using this relation and equality
(\ref{formulann}) with $A=(-\Delta)$ we assert that%
\[
C_{c}^{\infty}\subset D((-\Delta)^{\beta})
\]
for any $\beta>0$ ($C_{c}^{\infty}=C_{c}^{\infty}((0,\pi),\mathbb{R}^{m})$ is
the set of smooth functions with the supports contained in $(0,\pi)$).

We also have the following three lemmas.

\begin{lemma}
\label{zanurzenie2}If $\beta>\frac{1}{4}$, then%
\[
D((-\Delta)^{\beta})\subset L_{m}^{\infty}=L^{\infty}((0,\pi),\mathbb{R}^{m})
\]
and this embedding is continuous, more precisely,%
\[
\left\Vert x\right\Vert _{L_{m}^{\infty}}\leq\sqrt{\frac{2}{\pi}\zeta(4\beta
)}\left\Vert x\right\Vert _{\thicksim\beta}%
\]
for $x\in D((-\Delta)^{\beta})$, where $\zeta(4\beta)$ is the value of the
Riemann zeta function $\zeta(\gamma)=%
{\displaystyle\sum\nolimits_{j=1}^{\infty}}
\frac{1}{j^{\gamma}\text{ }}$ at $\gamma=4\beta$.
\end{lemma}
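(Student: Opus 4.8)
The plan is to work directly with the Fourier sine expansion $x(t)=\sum_{j=1}^{\infty}a_{j}\sqrt{\tfrac{2}{\pi}}\sin jt$ of an element $x\in D((-\Delta)^{\beta})$, and to control the sup-norm by the $\ell^{1}$-norm of the coefficient sequence $(|a_{j}|)_{j}$. The key observation is that, since $|\sin jt|\leq 1$ for every $t$ and $j$, one has the crude pointwise bound
\[
|x(t)|\leq\sqrt{\tfrac{2}{\pi}}\sum_{j=1}^{\infty}|a_{j}|
\]
once absolute convergence of the series $\sum_{j}|a_{j}|$ is known; so everything reduces to estimating $\sum_{j}|a_{j}|$ in terms of $\left\Vert x\right\Vert _{\thicksim\beta}$.

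First I would insert the weight $j^{2\beta}$ and apply the Cauchy--Schwarz inequality in the form
\[
\sum_{j=1}^{\infty}|a_{j}|=\sum_{j=1}^{\infty}\frac{1}{j^{2\beta}}\,\bigl(j^{2\beta}|a_{j}|\bigr)\leq\Bigl(\sum_{j=1}^{\infty}\frac{1}{j^{4\beta}}\Bigr)^{1/2}\Bigl(\sum_{j=1}^{\infty}j^{4\beta}|a_{j}|^{2}\Bigr)^{1/2}.
\]
Here the first factor is exactly $\sqrt{\zeta(4\beta)}$, which is finite precisely because the hypothesis $\beta>\tfrac{1}{4}$ gives $4\beta>1$; this is the only place the hypothesis enters. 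The second factor is $\left\Vert (-\Delta)^{\beta}x\right\Vert _{L^{2}}=\left\Vert x\right\Vert _{\thicksim\beta}$, by the definition of the $\thicksim\beta$-norm and the spectral description of $(-\Delta)^{\beta}$. Combining this with the pointwise bound and taking the essential supremum over $t\in(0,\pi)$ yields the claimed inequality $\left\Vert x\right\Vert _{L_{m}^{\infty}}\leq\sqrt{\tfrac{2}{\pi}\zeta(4\beta)}\,\left\Vert x\right\Vert _{\thicksim\beta}$.

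The one point that deserves care --- and what I would treat as the genuine (if mild) obstacle --- is the justification that the essential supremum of $x$ is actually governed by the series, i.e. that $x$ coincides a.e. with the function defined by $\sum_{j}a_{j}\sqrt{\tfrac{2}{\pi}}\sin jt$. But the Cauchy--Schwarz estimate above already shows $\sum_{j}|a_{j}|<\infty$, so by the Weierstrass $M$-test the sine series converges uniformly on $[0,\pi]$ to a continuous function; this continuous function must agree a.e. with $x$ (cf. the footnote invoking Carleson's theorem, though uniform convergence makes the identification elementary here). Hence $x$ admits a continuous representative to which the pointwise bound applies everywhere, and the inclusion $D((-\Delta)^{\beta})\subset L_{m}^{\infty}$ together with continuity of the embedding follows at once.
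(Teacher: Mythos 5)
Your proof is correct and follows essentially the same route as the paper: both insert the weight $j^{2\beta}$, apply the Cauchy--Schwarz inequality to bound $\sum_{j}|a_{j}|$ by $\sqrt{\zeta(4\beta)}\,\Vert x\Vert_{\thicksim\beta}$, and then use the crude bound $|\sin jt|\leq1$. Your extra remark that $\sum_{j}|a_{j}|<\infty$ gives uniform convergence of the sine series (so the identification of $x$ with the sum is elementary, without needing Carleson) is a small but welcome tightening of the paper's argument.
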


\begin{proof}
Let $x(t)=%
{\displaystyle\sum\nolimits_{j=1}^{\infty}}
a_{j}\sqrt{\frac{2}{\pi}}\sin jt\in D((-\Delta)^{\beta})$. Since
\[%
{\displaystyle\sum\nolimits_{j=1}^{\infty}}
((j^{2})^{\beta})^{2}a_{j}^{2}<\infty
\]
and $\beta>\frac{1}{4}$, therefore, for $t\in(0,\pi)$ a.e., we have%
\begin{multline*}
\left\vert x(t)\right\vert ^{2}=\left\vert
{\displaystyle\sum\nolimits_{j=1}^{\infty}}
a_{j}\sqrt{\frac{2}{\pi}}\sin jt\right\vert ^{2}\leq\frac{2}{\pi}\left(
{\displaystyle\sum\nolimits_{j=1}^{\infty}}
\left\vert a_{j}\right\vert \right)  ^{2}\\
=\frac{2}{\pi}\left(
{\displaystyle\sum\nolimits_{j=1}^{\infty}}
\frac{(j^{2})^{\beta}\left\vert a_{j}\right\vert }{(j^{2})^{\beta}}\right)
^{2}\leq\frac{2}{\pi}\left(
{\displaystyle\sum\nolimits_{j=1}^{\infty}}
((j^{2})^{\beta})^{2}\left\vert a_{j}\right\vert ^{2}\right)  \left(
{\displaystyle\sum\nolimits_{j=1}^{\infty}}
\frac{1}{\left(  (j^{2})^{\beta}\right)  ^{2}}\right) \\
=\frac{2}{\pi}\left\Vert x\right\Vert _{\thicksim\beta}^{2}\zeta
(4\beta)<\infty
\end{multline*}
and the proof is completed.
\end{proof}

\begin{lemma}
\label{zanurzenie1}If $\beta\geq\frac{1}{2}$, then
\[
D((-\Delta)^{\beta})\subset H_{0}^{1}%
\]
and, consequently,%
\[
D((-\Delta)^{\beta})\subset C=C([0,\pi],\mathbb{R}^{m}).
\]

\end{lemma}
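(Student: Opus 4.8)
The plan is to reduce to the borderline exponent $\beta=\frac12$ and then, for a given $x\in D((-\Delta)^{1/2})$, to exhibit an explicit $L^2$ function playing the role of its weak derivative; this gives membership in $H^1$, which I will afterwards upgrade to $H_0^1$ by a closedness argument. First I would invoke the nesting (\ref{b1b2}): since $\beta\geq\frac12$ yields $D((-\Delta)^{\beta})\subset D((-\Delta)^{1/2})$, it suffices to prove $D((-\Delta)^{1/2})\subset H_0^1$. So fix $x(t)=\sum_{j=1}^{\infty}a_j\sqrt{\tfrac{2}{\pi}}\sin jt\in D((-\Delta)^{1/2})$; by the description of the domain the defining condition reads $\sum_{j=1}^{\infty}j^2\left\vert a_j\right\vert^2<\infty$, because $((j^2)^{1/2})^2=j^2$.

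Next I would introduce the candidate derivative $y(t):=\sum_{j=1}^{\infty}a_j\sqrt{\tfrac{2}{\pi}}\,j\cos jt$. Since the system $\{\sqrt{2/\pi}\cos jt\}_{j\geq1}$ is orthonormal in $L^2((0,\pi))$ and $\sum_{j}(j\left\vert a_j\right\vert)^2<\infty$, the partial sums of this series are Cauchy in $L^2$, hence $y\in L^2$. To identify $y$ as the weak derivative of $x$, I would work with the smooth partial sums $x_N(t)=\sum_{j=1}^{N}a_j\sqrt{\tfrac{2}{\pi}}\sin jt$, which satisfy $x_N'=\sum_{j=1}^{N}a_j\sqrt{\tfrac{2}{\pi}}\,j\cos jt=:y_N$. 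For any test function $\varphi\in C_c^{\infty}((0,\pi))$, integration by parts gives $\int_0^{\pi}x_N\varphi'=-\int_0^{\pi}y_N\varphi$; letting $N\to\infty$ and using $x_N\to x$ and $y_N\to y$ in $L^2$ yields $\int_0^{\pi}x\varphi'=-\int_0^{\pi}y\varphi$. Thus $x$ has weak derivative $y\in L^2$, i.e. $x\in H^1$.

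To obtain the sharper conclusion $x\in H_0^1$, I would observe that each $x_N$ is a finite combination of the functions $\sin jt$, which are smooth on $[0,\pi]$ and vanish at both endpoints, so $x_N\in H_0^1$. Since $x_N\to x$ and $x_N'=y_N\to y=x'$ in $L^2$, the convergence $x_N\to x$ holds in the $H^1$-norm; as $H_0^1$ is closed in $H^1$, it follows that $x\in H_0^1$. Finally, the stated consequence $D((-\Delta)^{\beta})\subset C$ is immediate from the one-dimensional Sobolev embedding $H^1((0,\pi))\hookrightarrow C([0,\pi])$, applied to each component.

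The step I expect to require the most care is the simultaneous handling of the weak-derivative identification and the $H_0^1$ membership. One must control the convergence of the cosine series in $L^2$ (which is not absolute and is governed precisely by the domain condition $\sum j^2\left\vert a_j\right\vert^2<\infty$), justify the passage of the limit through the integration-by-parts identity, and—crucially—capture the vanishing at the endpoints not pointwise but through the closedness of $H_0^1$ in $H^1$, since the series representation does not give direct pointwise boundary information.
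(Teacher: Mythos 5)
Your argument is correct, and it reaches the conclusion by a genuinely lighter route than the paper's. Both proofs begin identically: reduce to $\beta=\frac{1}{2}$ via (\ref{b1b2}), note that the domain condition $\sum_{j}j^{2}\left\vert a_{j}\right\vert ^{2}<\infty$ makes the derivatives of the partial sums Cauchy in $L^{2}$, and use termwise integration by parts. The divergence is in how the boundary condition is captured. The paper extracts from $(S_{n}^{\prime})$ an a.e.-convergent subsequence dominated by an $L^{2}$ function, deduces equiabsolute continuity of the corresponding partial sums, and invokes the Ascoli--Arzel\`{a} theorem for absolutely continuous functions to produce a uniformly convergent subsequence whose absolutely continuous limit vanishes at $0$; this places $x$ in $W_{0}^{1,1}$ before the weak derivative is even identified. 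You instead identify the weak derivative $y=\sum_{j}a_{j}\sqrt{2/\pi}\,j\cos jt$ directly by passing to the limit in $\int_{0}^{\pi}x_{N}\varphi^{\prime}=-\int_{0}^{\pi}y_{N}\varphi$, so that $x_{N}\rightarrow x$ in the full $H^{1}$ norm, and then obtain the subscript $0$ from the closedness of $H_{0}^{1}$ in $H^{1}$ together with the fact that a smooth function on $[0,\pi]$ vanishing at both endpoints belongs to $H_{0}^{1}$ --- that last fact deserves an explicit citation (e.g.\ the characterization of $H_{0}^{1}$ in \cite{Brezis}), since $\sin jt$ is not compactly supported in $(0,\pi)$. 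Your route avoids all pointwise and uniform-convergence considerations and is shorter; the paper's heavier route has the side benefit of producing the absolutely continuous representative of $x$ on $[0,\pi]$, which the author reuses later (e.g.\ at the end of the proof of Lemma \ref{equi}). Your derivation of the embedding into $C$ from $H^{1}\hookrightarrow C([0,\pi])$, applied componentwise, matches the paper's appeal to the corresponding property of $W^{1,1}((0,\pi),\mathbb{R}^{m})$.
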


\begin{proof}
Of course (cf. (\ref{b1b2})), it is sufficient to show that $D((-\Delta
)^{\frac{1}{2}})\subset H_{0}^{1}$. Indeed, let $x(t)=%
{\displaystyle\sum\nolimits_{j=1}^{\infty}}
a_{j}\sqrt{\frac{2}{\pi}}\sin jt\in D((-\Delta)^{\frac{1}{2}})$ and consider
this series on the interval $[0,\pi]$. The sequence $(S_{n})$ of partial sums
converges in $L^{2}$ to $x$. From the convergence of the series $%
{\displaystyle\sum\nolimits_{j=1}^{\infty}}
j^{2}a_{j}^{2}$ it follows that the sequence $(S_{n}^{\prime})$ of derivatives
converges in $L^{2}$ to a function. So (cf. \cite{Brezis}), one can choose a
subsequence $(S_{n_{k}}^{\prime})$ convergent a.e. on $[0,\pi]$ to this
function and bounded pointwise a.e. on $[0,\pi]$ by a function $g\in L^{2}$.
Consequently, the sequence $(S_{n_{k}}^{\prime})$ is equiabsolutely integrable
on $[0,\pi]$. So, the sequence $(S_{n_{k}})$ is equiabsolutely continuous on
$[0,\pi]$. Of course, $S_{n_{k}}(0)=0$, thus%
\[
\left\vert S_{n_{k}}(t)\right\vert =\left\vert S_{n_{k}}(0)+%
{\displaystyle\int\nolimits_{0}^{t}}
S_{n_{k}}^{\prime}(s)ds\right\vert \leq%
{\displaystyle\int\nolimits_{0}^{\pi}}
g(s)ds<\infty
\]
for $t\in\lbrack0,\pi]$. It means that elements of the sequence $(S_{n_{k}})$
satisfy the assumptions of the Ascoli-Arzela theorem for absolutely continuous
functions and, in consequence, there exists a subsequence $(S_{n_{k_{i}}})$
converging uniformly on $[0,\pi]$ to an absolutely continuous function
$\overline{x}$. Clearly, $(S_{n_{k_{i}}})$ converges to $\overline{x}$ in
$L^{2}$. The uniqueness of the limit in $L^{2}$ means that $x=\overline{x}$
a.e. on $(0,\pi)$. So, $x$ has a representative which is absolutely continuous
on $[0,\pi]$ and satisfies Dirichlet boundary conditions, i.e. $x\in
W_{0}^{1,1}((0,\pi),\mathbb{R}^{m})$ (the classical Sobolev space).
Consequently, there exists a function $g\in L^{1}$ such that%
\[%
{\displaystyle\int\nolimits_{0}^{\pi}}
x(t)\varphi^{\prime}(t)dt=-%
{\displaystyle\int\nolimits_{0}^{\pi}}
g(t)\varphi(t)dt
\]
for any $\varphi\in C_{c}^{\infty}$. But%
\begin{multline*}%
{\displaystyle\int\nolimits_{0}^{\pi}}
x(t)\varphi^{\prime}(t)dt=%
{\displaystyle\int\nolimits_{0}^{\pi}}
(%
{\displaystyle\sum\nolimits_{j=1}^{\infty}}
a_{j}\sqrt{\frac{2}{\pi}}\sin jt)\varphi^{\prime}(t)dt=%
{\displaystyle\int\nolimits_{0}^{\pi}}
\underset{n\rightarrow\infty}{\lim}S_{n}(t)\varphi^{\prime}(t)dt\\
=%
{\displaystyle\sum\nolimits_{j=1}^{\infty}}
{\displaystyle\int\nolimits_{0}^{\pi}}
a_{j}\sqrt{\frac{2}{\pi}}\sin jt\varphi^{\prime}(t)dt=%
{\displaystyle\sum\nolimits_{j=1}^{\infty}}
-%
{\displaystyle\int\nolimits_{0}^{\pi}}
ja_{j}\sqrt{\frac{2}{\pi}}\cos jt\varphi(t)dt\\
=-%
{\displaystyle\int\nolimits_{0}^{\pi}}
{\displaystyle\sum\nolimits_{j=1}^{\infty}}
ja_{j}\sqrt{\frac{2}{\pi}}\cos jt\varphi(t)dt
\end{multline*}
for $\varphi\in C_{c}^{\infty}$ (the last equality follows from the fact that
$%
{\displaystyle\sum\nolimits_{j=1}^{\infty}}
j^{2}a_{j}^{2}<\infty$ and, consequently, $%
{\displaystyle\sum\nolimits_{n=1}^{\infty}}
ja_{j}\sqrt{\frac{2}{\pi}}\cos jt\in L^{2}$). Thus,%
\[
g(t)=%
{\displaystyle\sum\nolimits_{n=1}^{\infty}}
ja_{j}\sqrt{\frac{2}{\pi}}\cos jt\in L^{2}%
\]
and, finally, $x\in H_{0}^{1}$.

The second part of the theorem follows from known property of Sobolev space
$W^{1,1}((0,\pi),\mathbb{R}^{m})$.
\end{proof}

\begin{lemma}
\label{equi}If $\beta>\frac{3}{4}$, then any bounded the set $B\subset
D((-\Delta)^{\beta})$ is equicontinuous on $[0,\pi]$.
\end{lemma}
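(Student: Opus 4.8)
The plan is to deduce equicontinuity from a single uniform Lipschitz estimate read off from the Fourier-sine expansion. First I would fix $M>0$ with $\left\Vert x\right\Vert _{\thicksim\beta}\leq M$ for every $x\in B$ (this is exactly the boundedness of $B$) and write each $x\in B$ as $x(t)=\sum_{j=1}^{\infty}a_{j}\sqrt{2/\pi}\sin jt$, so that, by the identity in (\ref{Poincare}),
\[
\left\Vert x\right\Vert _{\thicksim\beta}^{2}=\sum_{j=1}^{\infty}((j^{2})^{\beta})^{2}a_{j}^{2}=\sum_{j=1}^{\infty}j^{4\beta}a_{j}^{2}\leq M^{2}.
\]

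The one real ingredient is a uniform bound on $\sum_{j=1}^{\infty}j|a_{j}|$. Splitting $j|a_{j}|=j^{1-2\beta}(j^{2\beta}|a_{j}|)$ and applying the Cauchy--Schwarz inequality yields
\[
\sum_{j=1}^{\infty}j|a_{j}|\leq\Big(\sum_{j=1}^{\infty}j^{2-4\beta}\Big)^{1/2}\Big(\sum_{j=1}^{\infty}j^{4\beta}a_{j}^{2}\Big)^{1/2}=\sqrt{\zeta(4\beta-2)}\,\left\Vert x\right\Vert _{\thicksim\beta}\leq\sqrt{\zeta(4\beta-2)}\,M.
\]
Here is where the hypothesis is used, and it is the only delicate point: the series $\sum_{j=1}^{\infty}j^{2-4\beta}=\sum_{j=1}^{\infty}j^{-(4\beta-2)}$ converges to $\zeta(4\beta-2)$ exactly when $4\beta-2>1$, that is, when $\beta>\frac{3}{4}$. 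I expect this convergence threshold to be the main obstacle; everything else is routine.

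Granting this bound, I would estimate the increment directly. The convergence $\sum_{j}|a_{j}|<\infty$ makes the sine series converge uniformly, so, using that $\beta>\frac{3}{4}\geq\frac{1}{2}$ together with Lemma \ref{zanurzenie1} (which provides a continuous representative of $x$), the representation $x(t)=\sum_{j}a_{j}\sqrt{2/\pi}\sin jt$ holds at every $t\in[0,\pi]$. Combining this with the elementary inequality $|\sin jt-\sin js|\leq j|t-s|$ gives, for all $x\in B$ and all $t,s\in[0,\pi]$,
\[
|x(t)-x(s)|\leq\sqrt{\tfrac{2}{\pi}}\sum_{j=1}^{\infty}|a_{j}|\,|\sin jt-\sin js|\leq\sqrt{\tfrac{2}{\pi}}\,|t-s|\sum_{j=1}^{\infty}j|a_{j}|\leq L\,|t-s|,
\]
with the common constant $L=\sqrt{2/\pi}\,\sqrt{\zeta(4\beta-2)}\,M$ independent of $x$. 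Hence all members of $B$ are Lipschitz with the same constant $L$, and equicontinuity follows immediately: given $\varepsilon>0$ it suffices to take $\delta=\varepsilon/L$. Equivalently, the same Cauchy--Schwarz bound shows that the termwise differentiated series $\sum_{j}ja_{j}\sqrt{2/\pi}\cos jt$ converges absolutely and uniformly, so each $x\in B$ is in fact $C^{1}$ with $\left\Vert x'\right\Vert _{L^{\infty}}\leq L$, and the Lipschitz estimate then comes from the fundamental theorem of calculus.
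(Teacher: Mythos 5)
Your proof is correct and follows essentially the same route as the paper: the same Cauchy--Schwarz splitting $j|a_j| = j^{1-2\beta}\cdot j^{2\beta}|a_j|$ producing the factor $\zeta(4\beta-2)$ (whence the threshold $\beta>\tfrac34$), the same bound $|\sin jt-\sin js|\le j|t-s|$, and the same resulting uniform Lipschitz constant $\sqrt{\tfrac{2}{\pi}\zeta(4\beta-2)}\,\Vert x\Vert_{\thicksim\beta}$. The only difference is cosmetic (you argue with the increment rather than its square, and you justify the pointwise validity of the sine expansion via uniform convergence where the paper invokes the absolutely continuous representative).
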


\begin{proof}
Similarly as in the proof of Lemma \ref{zanurzenie2} we obtain%
\begin{multline*}
\left\vert x(t_{1})-x(t_{2})\right\vert ^{2}=\left\vert
{\displaystyle\sum\nolimits_{j=1}^{\infty}}
a_{j}\sqrt{\frac{2}{\pi}}(\sin jt_{1}-\sin jt_{2})\right\vert ^{2}\\
\leq(%
{\displaystyle\sum\nolimits_{j=1}^{\infty}}
\left\vert a_{j}\right\vert \sqrt{\frac{2}{\pi}}2\left\vert \sin\frac
{j(t_{1}-t_{2})}{2}\right\vert )^{2}\leq\frac{2}{\pi}\left\vert t_{1}%
-t_{2}\right\vert ^{2}(%
{\displaystyle\sum\nolimits_{j=1}^{\infty}}
\left\vert a_{j}\right\vert j)^{2}\\
=\frac{2}{\pi}\left\vert t_{1}-t_{2}\right\vert ^{2}\left(
{\displaystyle\sum\nolimits_{j=1}^{\infty}}
\frac{(j^{2})^{\beta}\left\vert a_{j}\right\vert }{(j^{2})^{\beta}j^{-1}%
}\right)  ^{2}\\
\leq\frac{2}{\pi}\left\vert t_{1}-t_{2}\right\vert ^{2}\left(
{\displaystyle\sum\nolimits_{j=1}^{\infty}}
((j^{2})^{\beta})^{2}\left\vert a_{j}\right\vert ^{2}\right)  \left(
{\displaystyle\sum\nolimits_{j=1}^{\infty}}
\frac{1}{(j^{2\beta-1})^{2}}\right) \\
=\frac{2}{\pi}\left\vert t_{1}-t_{2}\right\vert ^{2}\left\Vert x\right\Vert
_{\thicksim\beta}^{2}\zeta(4\beta-2)<\infty
\end{multline*}
for $t_{1},t_{2}\in(0,\pi)$ a.e., where $x(t)=%
{\displaystyle\sum\nolimits_{j=1}^{\infty}}
a_{j}\sqrt{\frac{2}{\pi}}\sin jt\in D((-\Delta)^{\beta})$. Identifying $x$
with its absolutely continuous representative on $[0,\pi]$ we assert that the
above estimation holds true for all $t_{1},t_{2}\in\lbrack0,\pi]$.
\end{proof}

Using Lemmas \ref{zanurzenie2}, \ref{zanurzenie1}, \ref{equi} we obtain

\begin{corollary}
If $\beta>\frac{3}{4}$, then the embedding%
\[
D((-\Delta)^{\beta})\subset C
\]
is compact.
\end{corollary}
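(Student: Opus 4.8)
The plan is to deduce compactness directly from the Arzelà--Arzelà–Ascoli theorem, since the three preceding lemmas have already done all of the genuine analytic work. Recall that a continuous linear embedding into the Banach space $C=C([0,\pi],\mathbb{R}^{m})$ is compact precisely when it carries bounded sets to relatively compact (precompact) subsets of $C$ in the uniform topology. Accordingly, I would fix an arbitrary set $B\subset D((-\Delta)^{\beta})$ that is bounded in the norm $\left\Vert \cdot\right\Vert _{\thicksim\beta}$ and show that its image is relatively compact in $C$. The key observation is that the hypothesis $\beta>\frac{3}{4}$ simultaneously clears all three thresholds $\frac{1}{4}$, $\frac{1}{2}$, $\frac{3}{4}$ appearing in Lemmas \ref{zanurzenie2}, \ref{zanurzenie1}, \ref{equi}, so each of those results is available.

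The first step is uniform boundedness in $C$. Since $\beta>\frac{3}{4}\geq\frac{1}{2}$, Lemma \ref{zanurzenie1} guarantees that every $x\in B$ possesses a (unique) continuous representative on $[0,\pi]$, so we may genuinely regard $B$ as a subset of $C$; for such continuous functions the supremum norm on $[0,\pi]$ agrees with the essential supremum. Then the continuous embedding of Lemma \ref{zanurzenie2} (applicable because $\beta>\frac{1}{4}$) supplies the bound $\left\Vert x\right\Vert _{L_{m}^{\infty}}\leq\sqrt{\frac{2}{\pi}\zeta(4\beta)}\,\left\Vert x\right\Vert _{\thicksim\beta}$. Because $B$ is bounded in $\left\Vert \cdot\right\Vert _{\thicksim\beta}$, the right-hand side is bounded uniformly over $x\in B$, and hence $B$ is uniformly bounded in $C$ (in particular pointwise bounded on $[0,\pi]$).

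The second step is equicontinuity, which is exactly the assertion of Lemma \ref{equi} for $\beta>\frac{3}{4}$: the Hölder-type estimate proved there shows that the modulus of continuity of the members of $B$ is controlled uniformly by $\left\Vert \cdot\right\Vert _{\thicksim\beta}$, so $B$ is equicontinuous on the compact interval $[0,\pi]$. With uniform boundedness and equicontinuity both in hand, the classical Arzelà–Ascoli theorem immediately yields that $B$ is relatively compact in $C$. As $B$ was an arbitrary bounded subset of $D((-\Delta)^{\beta})$, the embedding $D((-\Delta)^{\beta})\subset C$ is compact.

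I do not expect a substantive obstacle here, since the two nontrivial ingredients---the uniform sup-norm bound and the equicontinuity estimate---are precisely the contents of Lemmas \ref{zanurzenie2} and \ref{equi}. The only points demanding a little care are bookkeeping ones: checking that $\beta>\frac{3}{4}$ really does activate all three lemmas, and noting that the $L^{\infty}$ bound transfers to a genuine $C$ bound because for $\beta\geq\frac{1}{2}$ the elements of $D((-\Delta)^{\beta})$ are honestly continuous. Once these are settled, the corollary is just the assembly of the Arzelà–Ascoli hypotheses.
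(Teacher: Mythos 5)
Your proposal is correct and follows essentially the same route as the paper: the corollary is stated there as a direct consequence of Lemmas \ref{zanurzenie2}, \ref{zanurzenie1} and \ref{equi}, and the intended (unwritten) argument is exactly the Arzel\`a--Ascoli assembly you carry out --- uniform boundedness in $C$ from the $L^{\infty}$ embedding plus continuity of representatives, equicontinuity from the third lemma. No gaps; your only addition is to spell out the bookkeeping the paper leaves implicit.
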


\subsection{Equivalence of equations}

Fact that the operator $(-\Delta)^{\beta}$ ($\beta>0$) is self-adjoint means
that its domain satisfies the equality%
\begin{gather}
D((-\Delta)^{\beta})=\{x\in L^{2};\ \text{\textit{there exists} }z\in
L^{2}\text{ \textit{such that}}\label{sa11}\\%
{\displaystyle\int\nolimits_{0}^{\pi}}
x(t)(-\Delta)^{\beta}y(t)dt=%
{\displaystyle\int\nolimits_{0}^{\pi}}
z(t)y(t)dt\text{ \textit{for any} }y\in D((-\Delta)^{\beta})\}\nonumber
\end{gather}
and%
\begin{equation}
(-\Delta)^{\beta}x=z\label{sa2}%
\end{equation}
for $x\in D((-\Delta)^{\beta})$.

From (\ref{beta1beta2}) it follows that $x\in D((-\Delta)^{\beta})$ if and
only if $x\in D((-\Delta)^{\frac{\beta}{2}})$, $(-\Delta)^{\frac{\beta}{2}%
}x\in D((-\Delta)^{\frac{\beta}{2}})$ and, in such a case,%
\begin{equation}
(-\Delta)^{\frac{\beta}{2}}((-\Delta)^{\frac{\beta}{2}}x)=(-\Delta)^{\beta
}x\text{.} \label{betabeta2beta}%
\end{equation}
Using this fact and (\ref{sa11}), (\ref{sa2}), we obtain

\begin{lemma}
\label{slabe}If $\beta>0$ and $g\in L^{2}$, then $x\in D((-\Delta)^{\beta})$
and%
\[
(-\Delta)^{\beta}x=g
\]
if and only if $x\in D((-\Delta)^{\frac{\beta}{2}})$ and%
\[%
{\displaystyle\int\nolimits_{0}^{\pi}}
(-\Delta)^{\frac{\beta}{2}}x(t)(-\Delta)^{\frac{\beta}{2}}y(t)dt=%
{\displaystyle\int\nolimits_{0}^{\pi}}
g(t)y(t)dt
\]
for any $y\in D((-\Delta)^{\frac{\beta}{2}})$.
\end{lemma}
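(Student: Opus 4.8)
The plan is to reduce everything to the self-adjointness characterization (\ref{sa11})--(\ref{sa2}) applied not to $(-\Delta)^{\beta}$ but to the half-power operator $(-\Delta)^{\frac{\beta}{2}}$, which is itself self-adjoint (cf. (\ref{samosp})) and therefore obeys the same domain description with $\frac{\beta}{2}$ in place of $\beta$. The bridge between the two operators is the composition identity (\ref{betabeta2beta}), which says that $x\in D((-\Delta)^{\beta})$ exactly when $x\in D((-\Delta)^{\frac{\beta}{2}})$ and $(-\Delta)^{\frac{\beta}{2}}x\in D((-\Delta)^{\frac{\beta}{2}})$, and then $(-\Delta)^{\beta}x=(-\Delta)^{\frac{\beta}{2}}((-\Delta)^{\frac{\beta}{2}}x)$.

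For the forward implication, I would assume $x\in D((-\Delta)^{\beta})$ with $(-\Delta)^{\beta}x=g$. By (\ref{betabeta2beta}) the element $w:=(-\Delta)^{\frac{\beta}{2}}x$ lies in $D((-\Delta)^{\frac{\beta}{2}})$ and satisfies $(-\Delta)^{\frac{\beta}{2}}w=g$. Writing the self-adjointness identity (\ref{sa11})--(\ref{sa2}) for $(-\Delta)^{\frac{\beta}{2}}$ at the point $w$ gives $\int_{0}^{\pi}w(t)(-\Delta)^{\frac{\beta}{2}}y(t)\,dt=\int_{0}^{\pi}g(t)y(t)\,dt$ for every $y\in D((-\Delta)^{\frac{\beta}{2}})$, which is precisely the asserted weak identity once $w$ is replaced by $(-\Delta)^{\frac{\beta}{2}}x$.

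For the converse, I would start from $x\in D((-\Delta)^{\frac{\beta}{2}})$ satisfying the weak identity and set $w:=(-\Delta)^{\frac{\beta}{2}}x\in L^{2}$. The weak identity reads $\int_{0}^{\pi}w(t)(-\Delta)^{\frac{\beta}{2}}y(t)\,dt=\int_{0}^{\pi}g(t)y(t)\,dt$ for all $y\in D((-\Delta)^{\frac{\beta}{2}})$, which is exactly the testing condition in (\ref{sa11}) for the operator $(-\Delta)^{\frac{\beta}{2}}$ with candidate $z=g$. Self-adjointness then forces $w\in D((-\Delta)^{\frac{\beta}{2}})$ and $(-\Delta)^{\frac{\beta}{2}}w=g$. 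Hence $x\in D((-\Delta)^{\frac{\beta}{2}})$ and $(-\Delta)^{\frac{\beta}{2}}x=w\in D((-\Delta)^{\frac{\beta}{2}})$, so (\ref{betabeta2beta}) yields $x\in D((-\Delta)^{\beta})$ together with $(-\Delta)^{\beta}x=(-\Delta)^{\frac{\beta}{2}}w=g$.

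The only point requiring care --- and the main (mild) obstacle --- is the legitimacy of invoking (\ref{sa11})--(\ref{sa2}) with the exponent $\frac{\beta}{2}$: one must note that $(-\Delta)^{\frac{\beta}{2}}$ is a genuine self-adjoint operator, being a real Borel function of the self-adjoint operator $(-\Delta)$ (cf. (\ref{samosp})), and that its spectrum lies in $[0,\infty)$, so that both the composition rule (\ref{betabeta2beta}) (via Proposition \ref{zlozpot}) and the self-adjoint domain description are available for it. Once this is acknowledged, the equivalence is a direct symmetric unwinding of the two characterizations, with no analytic estimates involved.
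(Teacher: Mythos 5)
Your proposal is correct and follows essentially the same route as the paper, which obtains the lemma precisely by combining the factorization (\ref{betabeta2beta}) (i.e.\ $x\in D((-\Delta)^{\beta})$ iff $x\in D((-\Delta)^{\frac{\beta}{2}})$ and $(-\Delta)^{\frac{\beta}{2}}x\in D((-\Delta)^{\frac{\beta}{2}})$) with the self-adjointness characterization (\ref{sa11})--(\ref{sa2}) applied to the half-power operator. Your explicit unwinding of the two directions, including the remark that (\ref{sa11})--(\ref{sa2}) is legitimately invoked with exponent $\frac{\beta}{2}$ since $(-\Delta)^{\frac{\beta}{2}}$ is itself self-adjoint by (\ref{samosp}), is exactly the argument the paper leaves implicit.
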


\section{Global implicit function theorem}

Let $X$ be a real Banach space and $I:X\rightarrow\mathbb{R}$ - a functional
of class $C^{1}$. We say that $I$ satisfies \textit{Palais-Smale (PS)
condition} if any sequence $(x_{k})$ such that

\begin{itemize}
\item[$\cdot$] $\left\vert I(x_{k})\right\vert \leq M$ for all $k\in
\mathbb{N}$ and some $M>0,$

\item[$\cdot$] $I^{\prime}(x_{k})\longrightarrow0,$
\end{itemize}

\noindent admits a convergent subsequence ($I^{\prime}(x_{k})$ denotes the
Frechet differential of $I$ at $x_{k}$). A sequence $(x_{k})$ satisfying the
above conditions is called the (PS) sequence for $I$.

We have (cf. \cite{Id}, \cite{Id2})

\begin{theorem}
\label{gift2}Let $X$, $U$ be real Banach spaces, $H$ - a real Hilbert space.
If $F:X\times U\rightarrow H$ is continuously differentiable with respect to
$(x,u)\in X\times U$ and

\begin{itemize}
\item[$\cdot$] for any $u\in U$, the functional
\[
\varphi:X\ni x\longmapsto\frac{1}{2}\left\Vert F(x,u)\right\Vert ^{2}%
\in\mathbb{R}%
\]
satisfies (PS) condition

\item[$\cdot$] $F_{x}^{\prime}(x,u):X\rightarrow H$ is bijective for any
$(x,u)\in X\times U$,
\end{itemize}

\noindent then there exists a unique function $\lambda:U\rightarrow X$ such
that $F(\lambda(u),u)=0$ for any $u\in U$ and this function is of class
$C^{1}$ with differential $\lambda^{\prime}(u)$ at $u$ given by
\begin{equation}
\lambda^{\prime}(u)=-[F_{x}^{\prime}(\lambda(u),u)]^{-1}\circ F_{u}^{\prime
}(\lambda(u),u). \label{wzor}%
\end{equation}

\end{theorem}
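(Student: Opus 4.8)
The plan is to fix the parameter $u\in U$ and extract existence and uniqueness of a zero of $F(\cdot,u)$ from the critical points of the functional $\varphi$ of the statement, which I write $\varphi_u(x):=\tfrac12\|F(x,u)\|^{2}$ to stress the dependence on the fixed $u$; the $C^{1}$ dependence of $\lambda$ on $u$ will then follow from the classical local implicit function theorem. The observation used throughout is that, $F$ being $C^{1}$, one has $\varphi_u'(x)h=\langle F(x,u),F_x'(x,u)h\rangle$ for every $h\in X$. Hence if $x$ is a critical point of $\varphi_u$, then $F(x,u)$ is orthogonal to the range of $F_x'(x,u)$; since $F_x'(x,u)$ is surjective by hypothesis, this range is all of $H$ and therefore $F(x,u)=0$. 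In other words, \emph{every critical point of $\varphi_u$ is a zero of $F(\cdot,u)$}, and conversely.

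For existence I would minimize. The functional $\varphi_u$ is nonnegative, hence bounded below, and of class $C^{1}$. Applying Ekeland's variational principle at the level $m:=\inf_X\varphi_u\ge 0$ produces a minimizing sequence $(x_k)$ that is simultaneously a (PS) sequence, i.e.\ $\varphi_u(x_k)\to m$ and $\varphi_u'(x_k)\to 0$. The assumed (PS) condition yields a subsequence converging to some $x^{\ast}$, and continuity of $\varphi_u$ and $\varphi_u'$ gives $\varphi_u(x^{\ast})=m$ and $\varphi_u'(x^{\ast})=0$. By the observation above, $F(x^{\ast},u)=0$, so $m=\varphi_u(x^{\ast})=0$: the infimum is attained, equals zero, and $x^{\ast}$ solves the equation.

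The harder part is global uniqueness, and this is where I expect the main obstacle. Suppose $x_0\neq x_1$ were two zeros of $F(\cdot,u)$. At each $x_i$ the operator $F_x'(x_i,u)$ is a continuous bijection of Banach spaces, hence a topological isomorphism by the open mapping theorem, so the local inverse function theorem makes $F(\cdot,u)$ a local diffeomorphism near $x_i$. Consequently $x_i$ is an \emph{isolated} zero, and therefore a strict local minimum of $\varphi_u$ of value $0$. Two distinct strict local minima together with the (PS) condition create a mountain--pass geometry, and a suitable version of the mountain pass theorem (Ambrosetti--Rabinowitz, in the refined form of Pucci--Serrin / Ghoussoub--Preiss that also handles the degenerate min--max level $c=0$) produces a further critical point $x^{\ast}$ of $\varphi_u$ that is \emph{not} a local minimum. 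But by the key observation $x^{\ast}$ is again a zero of $F(\cdot,u)$, hence, by the same local-diffeomorphism argument, a strict local minimum: a contradiction. The delicate point is exactly the infinite-dimensional analysis, where the absence of local compactness means a strict local minimum need not sit behind a positive ``wall''; it is the (PS) condition that compensates and turns the min--max value into an actual critical value. This is the role played by the results of \cite{Id}, \cite{Id2}.

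Finally, having a well-defined $\lambda:U\to X$ with $F(\lambda(u),u)=0$, I obtain the regularity and formula (\ref{wzor}) locally. Fixing $u_0$ and using that $F$ is $C^{1}$ and $F_x'(\lambda(u_0),u_0)$ is a topological isomorphism, the classical implicit function theorem furnishes a $C^{1}$ map $\mu$ near $u_0$ with $F(\mu(u),u)=0$, $\mu(u_0)=\lambda(u_0)$, and $\mu'(u)=-[F_x'(\mu(u),u)]^{-1}\circ F_u'(\mu(u),u)$. The global uniqueness forces $\mu=\lambda$ on that neighbourhood, so $\lambda$ is $C^{1}$ near $u_0$ and satisfies (\ref{wzor}) there. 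Since $u_0$ is arbitrary, $\lambda\in C^{1}(U,X)$ and (\ref{wzor}) holds throughout.
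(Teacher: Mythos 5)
You should note first that the paper itself contains no proof of Theorem \ref{gift2} — it is quoted from \cite{Id}, \cite{Id2} — but your outline reproduces the strategy of those references essentially faithfully: Ekeland's principle plus (PS) to produce a minimizing critical point, surjectivity of $F_{x}^{\prime}$ to convert critical points of $\varphi_{u}$ into zeros of $F(\cdot,u)$, a mountain-pass argument between two isolated zeros for global uniqueness, and the classical local implicit function theorem patched together by that uniqueness for the $C^{1}$ regularity and formula (\ref{wzor}). The one step you leave as a pointer — the degenerate minimax level $c=0$ — is exactly where the cited proofs do the work, and it is closed there not by invoking a ``non-local-minimum'' refinement but by first showing, via Ekeland applied on an annulus around an isolated zero $x_{0}$ together with (PS), that the zero really does sit behind a positive wall $\inf_{\|x-x_{0}\|=r}\varphi_{u}>0$; the ordinary mountain pass theorem then yields a critical value $c>0$, which contradicts the fact (from your own key observation) that every critical value of $\varphi_{u}$ equals $0$ — a slightly cleaner way to land the contradiction than arguing about whether the third critical point is a local minimum.
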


\section{A boundary value problem}

Let us consider boundary value problem (\ref{W}). Using the global implicit
function theorem, we shall show that (under suitable assumptions) this problem
has a unique solution $x_{u}\in D((-\Delta)^{\beta})$ corresponding to any
fixed $u\in L_{r}^{\infty}=L^{\infty}((0,\pi),\mathbb{R}^{r})$ and the
mapping
\[
L_{r}^{\infty}\ni u\longmapsto x_{u}\in D((-\Delta)^{\beta})
\]
is continuously differentiable.

Consider the mapping%
\[
F:D((-\Delta)^{\beta})\times L_{r}^{\infty}\ni(x,u)\longmapsto(-\Delta
)^{\beta}x(t)-f(t,x(t),u(t))\in L^{2}.
\]
We shall formulate conditions guaranteeing that

\begin{itemize}
\item[$\cdot$] $F$ is of class $C^{1}$

\item[$\cdot$] differential $F_{x}(x,u):D((-\Delta)^{\beta})\rightarrow L^{2}$
is bijective for any $(x,u)\in D((-\Delta)^{\beta})\times L_{r}^{\infty}$

\item[$\cdot$] for any $u\in L_{r}^{\infty}$, functional%
\[
\varphi:D((-\Delta)^{\beta})\ni x\longmapsto\frac{1}{2}\left\Vert
F(x,u)\right\Vert _{L^{2}}^{2}\in\mathbb{R}%
\]
satisfies (PS) condition.
\end{itemize}

\subsection{Smoothness of $F$}

Assume that function $f$ is measurable in $t\in(0,\pi)$, continuously
differentiable in $(x,u)\in\mathbb{R}^{m}\times\mathbb{R}^{r}$ and%
\begin{equation}
\left\vert f(t,x,u)\right\vert ,\left\vert f_{x}(t,x,u)\right\vert ,\left\vert
f_{u}(t,x,u)\right\vert \leq a(t)\gamma(\left\vert x\right\vert )+b(t)\delta
(\left\vert u\right\vert ) \label{fwzrost1}%
\end{equation}
for $(t,x,u)\in(0,\pi)\times\mathbb{R}^{m}\times\mathbb{R}^{r}$, where $a,b\in
L^{2}$ and $\gamma,\delta:\mathbb{R}_{0}^{+}\rightarrow\mathbb{R}_{0}^{+}$ are
continuous functions. We have

\begin{proposition}
\label{Diff}If $\beta>\frac{1}{4}$, then $F$ is of class $C^{1}$ and the
differential $F^{\prime}(x,u):D((-\Delta)^{\beta})\times L_{r}^{\infty
}\rightarrow L^{2}$ of $F$ at $(x,u)$ is given by%
\[
F^{\prime}(x,u)(h,v)=(-\Delta)^{\beta}h(t)-f_{x}(t,x(t),u(t))h(t)-f_{u}%
(t,x(t),u(t))v(t)
\]
for $(h,v)\in D((-\Delta)^{\beta})\times L_{r}^{\infty}$.
\end{proposition}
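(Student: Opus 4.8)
The plan is to split $F$ as $F(x,u)=(-\Delta)^{\beta}x-N(x,u)$, where $N(x,u)(t)=f(t,x(t),u(t))$ is the Nemytskii (superposition) operator. The first summand is linear and bounded from $D((-\Delta)^{\beta})$ into $L^{2}$ (indeed an isometry in $\Vert\cdot\Vert_{\thicksim\beta}$), hence of class $C^{1}$ with derivative equal to itself; so the whole matter reduces to showing that $N:D((-\Delta)^{\beta})\times L_{r}^{\infty}\rightarrow L^{2}$ is of class $C^{1}$ with derivative $N^{\prime}(x,u)(h,v)(t)=f_{x}(t,x(t),u(t))h(t)+f_{u}(t,x(t),u(t))v(t)$. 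The decisive structural input is Lemma \ref{zanurzenie2}: since $\beta>\frac{1}{4}$, every $x\in D((-\Delta)^{\beta})$ lies in $L_{m}^{\infty}$ with $\Vert x\Vert_{L_{m}^{\infty}}\leq C\Vert x\Vert_{\thicksim\beta}$, so that both the state and the parameter may be treated as bounded functions.

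First I would check well-definedness. For fixed $(x,u)$ the bound (\ref{fwzrost1}) gives $|f(t,x(t),u(t))|\leq a(t)\gamma(\Vert x\Vert_{L_{m}^{\infty}})+b(t)\delta(\Vert u\Vert_{L_{r}^{\infty}})$, which lies in $L^{2}$ because $a,b\in L^{2}$; hence $N(x,u)\in L^{2}$. The same bound applied to $f_{x},f_{u}$, together with $h\in L_{m}^{\infty}$ (again via the embedding), $v\in L_{r}^{\infty}$, and Hölder's inequality, shows that the candidate derivative sends $(h,v)$ into $L^{2}$ and is a bounded linear operator, with operator norm controlled by $\Vert f_{x}(\cdot,x,u)\Vert_{L^{2}}+\Vert f_{u}(\cdot,x,u)\Vert_{L^{2}}$.

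Rather than estimate a Fréchet remainder directly, I would establish two standard ingredients and then invoke the theorem that a Gateaux-differentiable map whose Gateaux derivative is continuous in operator norm is of class $C^{1}$ with that derivative. For Gateaux differentiability, write the difference quotient by the fundamental theorem of calculus as $\tfrac{1}{\tau}(f(t,x+\tau h,u+\tau v)-f(t,x,u))=\int_{0}^{1}[f_{x}(t,x+\tau s h,u+\tau s v)h+f_{u}(t,x+\tau s h,u+\tau s v)v]\,ds$. As $\tau\rightarrow0$ the integrand converges pointwise to $f_{x}(t,x,u)h+f_{u}(t,x,u)v$ by the $C^{1}$ regularity of $f$, and for $|\tau|\leq1$ it is dominated — using (\ref{fwzrost1}), the uniform bounds on $x,h,u,v$, and continuity of $\gamma,\delta$ on compact intervals — by a fixed $L^{2}$ function; dominated convergence then yields convergence in $L^{2}$ to the claimed derivative.

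The main obstacle is the second ingredient: continuity of $(x,u)\mapsto N^{\prime}(x,u)$ in operator norm. By the Hölder estimate from the well-definedness step, this reduces to the $L^{2}$-continuity of the superposition operators $(x,u)\mapsto f_{x}(\cdot,x(\cdot),u(\cdot))$ and $(x,u)\mapsto f_{u}(\cdot,x(\cdot),u(\cdot))$ from $D((-\Delta)^{\beta})\times L_{r}^{\infty}$ into $L^{2}$. Here I would take a convergent sequence $(x_{n},u_{n})\rightarrow(x,u)$; by the embedding $x_{n}\rightarrow x$ in $L_{m}^{\infty}$, so $x_{n}\rightarrow x$ and $u_{n}\rightarrow u$ uniformly, hence almost everywhere, and continuity of $f_{x}$ in $(x,u)$ gives pointwise a.e.\ convergence of $f_{x}(\cdot,x_{n},u_{n})$ to $f_{x}(\cdot,x,u)$. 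Since convergent sequences are bounded in $L^{\infty}$, the growth bound (\ref{fwzrost1}) furnishes a common $L^{2}$ dominating function of the form $a(t)\gamma(M_{1})+b(t)\delta(M_{2})$, and dominated convergence closes the argument. Combining the two ingredients, $N$ — and therefore $F$ — is of class $C^{1}$ with the asserted derivative.
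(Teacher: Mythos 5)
Your proof is correct and follows essentially the same route as the paper's: both split off the linear term $(-\Delta)^{\beta}x$, reduce to the Nemytskii operator, use Lemma \ref{zanurzenie2} to control everything in $L^{\infty}$, obtain Gateaux differentiability by a mean-value plus dominated-convergence argument, prove operator-norm continuity of the derivative the same way, and conclude via the principle that a continuous Gateaux derivative is a Fr\'echet one. The only cosmetic differences are your use of the integral form of the mean value theorem and of the joint directional derivative in $(h,v)$ where the paper treats the partial differentials separately.
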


\begin{proof}
Smoothness of the first term of $F$ is obvious. So, let us consider the
mapping%
\[
G:D((-\Delta)^{\beta})\times L_{r}^{\infty}\ni(x,u)\longmapsto
f(t,x(t),u(t))\in L^{2}.
\]
We shall show that the mappings%
\[
G_{x}(x,u):D((-\Delta)^{\beta})\ni h\longmapsto f_{x}(t,x(t),u(t))h(t)\in
L^{2},
\]%
\[
G_{u}(x,u):L_{r}^{\infty}\ni v\longmapsto f_{u}(t,x(t),u(t))v(t)\in L^{2}%
\]
are partial Frechet differentials of $G$ at $(x,u)$ and mappings%
\begin{equation}
D((-\Delta)^{\beta})\times L_{r}^{\infty}\ni(x,u)\longmapsto G_{x}%
(x,u)\in\mathcal{L}(D((-\Delta)^{\beta}),L^{2}), \label{I}%
\end{equation}%
\begin{equation}
D((-\Delta)^{\beta})\times L_{r}^{\infty}\ni(x,u)\longmapsto G_{u}%
(x,u)\in\mathcal{L}(L_{r}^{\infty},L^{2}) \label{II}%
\end{equation}
are continuous. Of course, it is sufficient to check the differentiability in
Gateaux sense because continuity of the above two mappings implies that the
Gateaux differentials are Frechet ones.

So, let us consider differentiability of $G$ with respect to $x$. Linearity
and continuity of the mapping $G_{x}(x,u)$ are obvious (in view of Lemma
\ref{zanurzenie2}). To prove that $G_{x}(x,u)$ is Gateaux differential of $G$
with respect to $x$, we shall show that%
\begin{multline*}
\left\Vert \frac{G(x+\lambda_{k}h,u)-G(x,u)}{\lambda_{k}}-G_{x}%
(x,u)h\right\Vert _{L^{2}}^{2}\\
=%
{\displaystyle\int\nolimits_{0}^{\pi}}
\left\vert \frac{f(t,x(t)+\lambda_{k}h(t),u(t))-f(t,x(t),u(t))}{\lambda_{k}%
}-f_{x}(t,x(t),u(t))h(t)\right\vert ^{2}dt\\
\rightarrow0
\end{multline*}
for any sequence $(\lambda_{k})\subset(-1,1)$ such that $\lambda
_{k}\rightarrow0$. Indeed, the sequence of functions%
\[
t\longmapsto\frac{f(t,x(t)+\lambda_{k}h(t),u(t))-f(t,x(t),u(t))}{\lambda_{k}%
}-f_{x}(t,x(t),u(t))h(t)
\]
converges pointwise a.e. on $(0,\pi)$ to the zero function (by
differentiability of $f$ in $x$). Moreover, from the mean value theorem it
follows that this sequence is bounded by a function from $L^{2}$:%
\begin{multline*}
\left\vert \frac{f(t,x(t)+\lambda_{k}h(t),u(t))-f(t,x(t),u(t))}{\lambda_{k}%
}-f_{x}(t,x(t),u(t))h(t)\right\vert \\
=\left\vert f_{x}(t,x(t)+s_{t,k}\lambda_{k}h(t),u(t))h(t)-f_{x}%
(t,x(t),u(t))h(t)\right\vert \\
\leq const_{x,u,h}(a(t)+b(t))\left\vert h(t)\right\vert
\end{multline*}
where $s_{t,k}\in(0,1)$ and $const_{x,u,h}$ is a constant depending on
$x,u,h$. Thus, using the Lebesgue dominated convergence theorem we assert that
$G_{x}(x,u)$ is Gateaux differential of $G$ with respect to $x$.

In the same way, we check that $G_{u}(x,u)$ is Gateaux differential of $G$
with respect to $u$.

To finish the proof we shall show that the mappings (\ref{I}), (\ref{II}) are
continuous. Really, let $(x_{k},u_{k})\rightarrow(x_{0},u_{0})$ in
$D((-\Delta)^{\beta})\times L_{r}^{\infty}$. Then%
\begin{multline*}
\left\Vert (G_{x}(x_{k},u_{k})-G_{x}(x_{0},u_{0}))h\right\Vert _{L^{2}}^{2}\\
\leq%
{\displaystyle\int\nolimits_{0}^{\pi}}
\left\vert f_{x}(t,x_{k}(t),u_{k}(t))-f_{x}(t,x_{0}(t),u_{0}(t))\right\vert
^{2}\left\vert h(t)\right\vert ^{2}dt\\
\leq\left\Vert h\right\Vert _{\infty}^{2}%
{\displaystyle\int\nolimits_{0}^{\pi}}
\left\vert f_{x}(t,x_{k}(t),u_{k}(t))-f_{x}(t,x_{0}(t),u_{0}(t))\right\vert
^{2}dt\\
\leq\frac{2}{\pi}\zeta(4\beta)\left\Vert h\right\Vert _{\thicksim\beta}^{2}%
{\displaystyle\int\nolimits_{0}^{\pi}}
\left\vert f_{x}(t,x_{k}(t),u_{k}(t))-f_{x}(t,x_{0}(t),u_{0}(t))\right\vert
^{2}dt.
\end{multline*}
Consequently,%
\begin{multline*}
\left\Vert G_{x}(x_{k},u_{k})-G_{x}(x_{0},u_{0})\right\Vert _{\mathcal{L}%
(D((-\Delta)^{\beta}),L^{2})}\\
\leq\sqrt{\frac{2}{\pi}\zeta(4\beta)}(%
{\displaystyle\int\nolimits_{0}^{\pi}}
\left\vert f_{x}(t,x_{k}(t),u_{k}(t))-f_{x}(t,x_{0}(t),u_{0}(t))\right\vert
^{2}dt)^{\frac{1}{2}}.
\end{multline*}
Using Lemma \ref{zanurzenie2}, assumption (\ref{fwzrost1}) and the Lebesgue
dominated convergence theorem we assert that $G_{x}(x_{k},u_{k})\rightarrow
G_{x}(x_{0},u_{0})$ in $\mathcal{L}(D((-\Delta)^{\beta}),L^{2})$.

In a similar way, we check the continuity of the mapping%
\[
D((-\Delta)^{\beta})\times L_{r}^{\infty}\ni(x,u)\longmapsto G_{u}%
(x,u)\in\mathcal{L}(L_{r}^{\infty},L^{2}).
\]

The proof is completed.
\end{proof}

\subsection{Bijectivity of $F_{x}(x,u)$}

In view of the previous theorem and its proof, it is clear that if
$\beta>\frac{1}{4}$ and functions $f$, $f_{x}$ satisfy growth condition
(\ref{fwzrost1}), then the partial differential of $F$ with respect to $x$ is
of the form%
\[
F_{x}(x,u):D((-\Delta)^{\beta})\ni h\longmapsto(-\Delta)^{\beta}%
h(t)-f_{x}(t,x(t),u(t))h(t)\in L^{2}.
\]
for any $(x,u)\in D((-\Delta)^{\beta})\times L_{r}^{\infty}$. We also have

\begin{proposition}
\label{bijectivity}Assume that functions $f$, $f_{x}$ satisfy growth condition
(\ref{fwzrost1}). If $\beta>\frac{1}{2}$ and one of the following conditions
is satisfied

\begin{itemize}
\item[a)] $\left\Vert \Lambda\right\Vert _{L_{m\times m}^{1}}<\frac{\pi
}{2\zeta(2\beta)}$

\item[b)] $\Lambda(t)\leq0$, i.e. matrix $\Lambda(t)$ is nonpositive, for
$t\in(0,\pi)$ a.e.

\item[c)] $\Lambda\in L_{m\times m}^{\infty}$ and $\left\Vert \Lambda
\right\Vert _{\infty}<1$,
\end{itemize}

\noindent where $\Lambda(t):=f_{x}(t,x(t),u(t))$, $L_{m\times m}^{p}%
=L^{p}((0,\pi),\mathbb{R}^{m\times m})$ for $p=1,\infty$, then differential
$F_{x}(x,u):D((-\Delta)^{\beta})\rightarrow L^{2}$ is bijective ( \footnote{By
the norm of a matrix $C=[c_{i,j}]\in\mathbb{R}^{m\times m}$ we mean the value
$(%
{\displaystyle\sum\nolimits_{i,j=1}^{m}}
\left\vert c_{i,j}\right\vert ^{2})^{\frac{1}{2}}$.}).
\end{proposition}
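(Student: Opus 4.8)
The plan is to recast the equation $F_{x}(x,u)h=g$ (for a given $g\in L^{2}$) in weak form and solve it by the Lax--Milgram theorem on the Hilbert space $V:=D((-\Delta)^{\frac{\beta}{2}})$ equipped with $\left\Vert \cdot\right\Vert _{\thicksim\frac{\beta}{2}}$, and then to promote the weak solution to a genuine element of $D((-\Delta)^{\beta})$ by means of Lemma \ref{slabe}. Writing $\Lambda(t)=f_{x}(t,x(t),u(t))$, I introduce the bilinear form
\[
B(h,y)=\int_{0}^{\pi}(-\Delta)^{\frac{\beta}{2}}h(t)\cdot(-\Delta)^{\frac{\beta}{2}}y(t)\,dt-\int_{0}^{\pi}\big(\Lambda(t)h(t)\big)\cdot y(t)\,dt
\]
on $V\times V$. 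Since $x\in D((-\Delta)^{\beta})\subset L_{m}^{\infty}$ and $u\in L_{r}^{\infty}$, the growth condition (\ref{fwzrost1}) forces $\Lambda\in L_{m\times m}^{2}$, so $\Lambda h\in L^{2}$ and $B$ is well defined. Solving $B(h,y)=\int_{0}^{\pi}g(t)\cdot y(t)\,dt$ for all $y\in V$ is, by Lemma \ref{slabe} applied with right-hand side $\Lambda h+g\in L^{2}$, precisely the statement that $h\in D((-\Delta)^{\beta})$ and $(-\Delta)^{\beta}h-\Lambda h=g$, i.e.\ $F_{x}(x,u)h=g$.

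Next I verify the hypotheses of Lax--Milgram. Boundedness of $B$ is routine: the first term is controlled by $\left\Vert h\right\Vert _{\thicksim\frac{\beta}{2}}\left\Vert y\right\Vert _{\thicksim\frac{\beta}{2}}$ via Cauchy--Schwarz, and the second by $\left\Vert \Lambda\right\Vert _{L_{m\times m}^{1}}\left\Vert h\right\Vert _{L_{m}^{\infty}}\left\Vert y\right\Vert _{L_{m}^{\infty}}$, which is finite thanks to the continuous embedding $D((-\Delta)^{\frac{\beta}{2}})\subset L_{m}^{\infty}$ (Lemma \ref{zanurzenie2} with exponent $\frac{\beta}{2}$, legitimate because $\beta>\frac{1}{2}$ gives $\frac{\beta}{2}>\frac{1}{4}$). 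The heart of the matter is coercivity, $B(h,h)\geq c\left\Vert h\right\Vert _{\thicksim\frac{\beta}{2}}^{2}$ with $c>0$, and this is where the three alternatives enter. Using the Frobenius-norm bound $|h^{\top}\Lambda h|\leq|\Lambda|\,|h|^{2}$ one estimates $\int_{0}^{\pi}|\Lambda|\,|h|^{2}$ in three ways: in case a) by $\frac{2}{\pi}\zeta(2\beta)\left\Vert \Lambda\right\Vert _{L_{m\times m}^{1}}\left\Vert h\right\Vert _{\thicksim\frac{\beta}{2}}^{2}$ (again Lemma \ref{zanurzenie2}, noting $4\cdot\frac{\beta}{2}=2\beta$), so that $B(h,h)\geq\big(1-\frac{2}{\pi}\zeta(2\beta)\left\Vert \Lambda\right\Vert _{L_{m\times m}^{1}}\big)\left\Vert h\right\Vert _{\thicksim\frac{\beta}{2}}^{2}$ with a positive constant precisely under $\left\Vert \Lambda\right\Vert _{L_{m\times m}^{1}}<\frac{\pi}{2\zeta(2\beta)}$; in case b) the quadratic form satisfies $\int_{0}^{\pi}h^{\top}\Lambda h\leq0$, whence $B(h,h)\geq\left\Vert h\right\Vert _{\thicksim\frac{\beta}{2}}^{2}$; and in case c) by $\left\Vert \Lambda\right\Vert _{\infty}\left\Vert h\right\Vert _{L^{2}}^{2}\leq\left\Vert \Lambda\right\Vert _{\infty}\left\Vert h\right\Vert _{\thicksim\frac{\beta}{2}}^{2}$ via the Poincare inequality (\ref{Poincare}) at level $\frac{\beta}{2}$, giving $B(h,h)\geq(1-\left\Vert \Lambda\right\Vert _{\infty})\left\Vert h\right\Vert _{\thicksim\frac{\beta}{2}}^{2}$.

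With $B$ bounded and coercive, Lax--Milgram yields, for each $g\in L^{2}$, a unique $h\in V$ with $B(h,y)=\int_{0}^{\pi}g\cdot y$ for all $y\in V$; the regularity upgrade through Lemma \ref{slabe} then places $h$ in $D((-\Delta)^{\beta})$ and shows $F_{x}(x,u)h=g$, giving surjectivity, while uniqueness of the weak solution (applied with $g=0$) gives injectivity, hence bijectivity. I expect the main obstacle to be the coercivity estimate: one must handle the possibly non-symmetric, merely integrable matrix $\Lambda$ correctly --- in particular interpreting condition b) as negative semidefiniteness of the symmetric part, so that $h^{\top}\Lambda h\leq0$ --- and match the embedding constant $\zeta(2\beta)$ to the threshold in a), which is the one point where the value $\beta>\frac{1}{2}$ and the precise form of Lemma \ref{zanurzenie2} are genuinely used.
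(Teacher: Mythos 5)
Your proposal is correct and follows essentially the same route as the paper: the equation $F_{x}(x,u)h=g$ is reduced via Lemma \ref{slabe} to a weak formulation on $D((-\Delta)^{\frac{\beta}{2}})$, the same bilinear form is shown to be bounded and coercive under each of the three alternatives (with the identical embedding constant $\frac{2}{\pi}\zeta(2\beta)$ from Lemma \ref{zanurzenie2} in case a) and the Poincar\'e inequality in case c)), and Lax--Milgram delivers existence and uniqueness. Your closing remark about reading condition b) as nonpositivity of the quadratic form $h^{\top}\Lambda h$ is a sensible precision that the paper leaves implicit, but it does not change the argument.
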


\begin{remark}
In Part c) one can assume that $\beta>\frac{1}{4}$. In such a case the proof
of coercivity of $a$ remains unchanged and to show its continuity one
estimates%
\begin{align*}
\left\vert a(h,y)\right\vert  &  \leq\left\Vert h\right\Vert _{\thicksim
\frac{\beta}{2}}\left\Vert y\right\Vert _{\thicksim\frac{\beta}{2}}+\left\Vert
\Lambda\right\Vert _{\infty}\left\Vert h\right\Vert _{L^{2}}\left\Vert
y\right\Vert _{L^{2}}\\
&  \leq(1+\left\Vert \Lambda\right\Vert _{\infty})\left\Vert h\right\Vert
_{\thicksim\frac{\beta}{2}}\left\Vert y\right\Vert _{\thicksim\frac{\beta}{2}%
}.
\end{align*}

\end{remark}

\begin{proof}
[Proof of the Proposition]We shall show that, for any function $g\in L^{2}$,
equation%
\begin{equation}
(-\Delta)^{\beta}h(t)-\Lambda(t)h(t)=g(t) \label{liniowe}%
\end{equation}
has a unique solution in $D((-\Delta)^{\beta})$. Using Lemma \ref{slabe}, we
see that it is equivalent to show that there exists a unique function $h\in
D((-\Delta)^{\frac{\beta}{2}})$ such that%
\[%
{\displaystyle\int\nolimits_{0}^{\pi}}
(-\Delta)^{\frac{\beta}{2}}h(t)(-\Delta)^{\frac{\beta}{2}}y(t)dt=%
{\displaystyle\int\nolimits_{0}^{\pi}}
(\Lambda(t)h(t)+g(t))y(t)dt
\]
for any $y\in D((-\Delta)^{\frac{\beta}{2}})$. So, let us define a bilinear
form $a:D((-\Delta)^{\frac{\beta}{2}})\times D((-\Delta)^{\frac{\beta}{2}%
})\rightarrow\mathbb{R}$ by%
\[
a(h,y)=%
{\displaystyle\int\nolimits_{0}^{\pi}}
(-\Delta)^{\frac{\beta}{2}}h(t)(-\Delta)^{\frac{\beta}{2}}y(t)dt-%
{\displaystyle\int\nolimits_{0}^{\pi}}
\Lambda(t)h(t)y(t)dt\text{.}%
\]
This form is continuous. Indeed (cf. Lemma \ref{zanurzenie2}),%
\begin{align*}
\left\vert a(h,y)\right\vert  &  \leq\left\Vert h\right\Vert _{\thicksim
\frac{\beta}{2}}\left\Vert y\right\Vert _{\thicksim\frac{\beta}{2}}+\left\Vert
\Lambda\right\Vert _{L^{1}}\left\Vert h\right\Vert _{\infty}\left\Vert
y\right\Vert _{\infty}\\
&  \leq(1+\left\Vert \Lambda\right\Vert _{L^{1}}\frac{2}{\pi}\varsigma
(2\beta))\left\Vert h\right\Vert _{\thicksim\frac{\beta}{2}}\left\Vert
y\right\Vert _{\thicksim\frac{\beta}{2}}%
\end{align*}
for $h,y\in D((-\Delta)^{\frac{\beta}{2}})$. Moreover,

\noindent\textbf{Part a.}%
\begin{align*}
\left\vert a(h,h)\right\vert  &  =\left\vert
{\displaystyle\int\nolimits_{0}^{\pi}}
(-\Delta)^{\frac{\beta}{2}}h(t)(-\Delta)^{\frac{\beta}{2}}h(t)dt-%
{\displaystyle\int\nolimits_{0}^{\pi}}
\Lambda(t)h(t)h(t)dt\right\vert \\
&  \geq\left\Vert h\right\Vert _{\thicksim\frac{\beta}{2}}^{2}-\left\Vert
\Lambda\right\Vert _{L^{1}}\left\Vert h\right\Vert _{\infty}^{2}%
\geq(1-\left\Vert \Lambda\right\Vert _{L^{1}}\frac{2}{\pi}\varsigma
(2\beta))\left\Vert h\right\Vert _{\thicksim\frac{\beta}{2}}^{2},
\end{align*}

\noindent\textbf{Part b. }%
\[
\left\vert a(h,h)\right\vert \geq%
{\displaystyle\int\nolimits_{0}^{\pi}}
(-\Delta)^{\frac{\beta}{2}}h(t)(-\Delta)^{\frac{\beta}{2}}h(t)dt-%
{\displaystyle\int\nolimits_{0}^{\pi}}
\Lambda(t)h(t)h(t)dt\geq\left\Vert h\right\Vert _{\thicksim\frac{\beta}{2}%
}^{2}.
\]

\noindent\textbf{Part c. }%
\begin{align*}
\left\vert a(h,h)\right\vert  &  =\left\vert
{\displaystyle\int\nolimits_{0}^{\pi}}
(-\Delta)^{\frac{\beta}{2}}h(t)(-\Delta)^{\frac{\beta}{2}}h(t)dt-%
{\displaystyle\int\nolimits_{0}^{\pi}}
\Lambda(t)h(t)h(t)dt\right\vert \\
&  \geq\left\Vert h\right\Vert _{\thicksim\frac{\beta}{2}}^{2}-\left\Vert
\Lambda\right\Vert _{\infty}\left\Vert h\right\Vert _{L^{2}}^{2}%
\geq(1-\left\Vert \Lambda\right\Vert _{\infty})\left\Vert h\right\Vert
_{\thicksim\frac{\beta}{2}}^{2}.
\end{align*}

So, $a$ is coercive. From Lax-Milgram theorem it follows that for any linear
continuous functional $l:D((-\Delta)^{\frac{\beta}{2}})\rightarrow\mathbb{R}$
there exists a unique $h\in D((-\Delta)^{\frac{\beta}{2}})$ such that%
\[
a(h,y)=l(y)
\]
for any $y\in D((-\Delta)^{\frac{\beta}{2}})$. Since the functional%
\[
D((-\Delta)^{\frac{\beta}{2}})\ni y\longmapsto%
{\displaystyle\int\nolimits_{0}^{\pi}}
g(t)y(t)dt\in\mathbb{R}%
\]
is linear and continuous, therefore there exists a unique $h\in D((-\Delta
)^{\frac{\beta}{2}})$ such that%
\[%
{\displaystyle\int\nolimits_{0}^{\pi}}
(-\Delta)^{\frac{\beta}{2}}h(t)(-\Delta)^{\frac{\beta}{2}}y(t)dt-%
{\displaystyle\int\nolimits_{0}^{\pi}}
\Lambda(t)h(t)y(t)dt=%
{\displaystyle\int\nolimits_{0}^{\pi}}
g(t)y(t)dt
\]
for any $y\in D((-\Delta)^{\frac{\beta}{2}})$. The proof is completed.
\end{proof}

\subsection{(PS) condition}

In the same way as in the proof of Proposition \ref{bijectivity} one can show
that, for any $\beta>0$ and any function $g\in L^{2}$, there exists a unique
function $x_{g}\in D((-\Delta)^{\frac{\beta}{2}})$ such that%
\[%
{\displaystyle\int\nolimits_{0}^{\pi}}
(-\Delta)^{\frac{\beta}{2}}x_{g}(t)(-\Delta)^{\frac{\beta}{2}}y(t)dt=%
{\displaystyle\int\nolimits_{0}^{\pi}}
g(t)y(t)dt
\]
for any $y\in D((-\Delta)^{\frac{\beta}{2}})$. It means, in view of Lemma
\ref{slabe}, that the following lemma holds true.

\begin{lemma}
For any $\beta>0$ and any $g\in L^{2}$, there exists a unique solution
$x_{g}\in D((-\Delta)^{\beta})$ of the equation%
\[
(-\Delta)^{\beta}x=g.
\]

\end{lemma}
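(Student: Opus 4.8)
The plan is to specialize the Lax--Milgram argument from the proof of Proposition \ref{bijectivity} to the degenerate case $\Lambda\equiv 0$, and then to invoke Lemma \ref{slabe} to pass from the weak formulation back to the strong equation. First I would introduce the symmetric bilinear form
\[
a(h,y)=\int_{0}^{\pi}(-\Delta)^{\frac{\beta}{2}}h(t)\,(-\Delta)^{\frac{\beta}{2}}y(t)\,dt
\]
on $D((-\Delta)^{\frac{\beta}{2}})$. The key observation is that $a$ is nothing but the scalar product $\langle\cdot,\cdot\rangle_{\thicksim\frac{\beta}{2}}$, which by the Poincare inequality (\ref{Poincare}) (applied with $\frac{\beta}{2}$ in place of $\beta$) induces a norm equivalent to $\|\cdot\|_{\beta}$ and so turns $D((-\Delta)^{\frac{\beta}{2}})$ into a Hilbert space. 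Consequently $a$ is automatically continuous and coercive, with $a(h,h)=\|h\|_{\thicksim\frac{\beta}{2}}^{2}$, and none of the three hypotheses a)--c) of Proposition \ref{bijectivity} is needed.

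Next I would check that the data define a bounded linear functional. For a fixed $g\in L^{2}$, the map
\[
l:D((-\Delta)^{\frac{\beta}{2}})\ni y\longmapsto\int_{0}^{\pi}g(t)y(t)\,dt\in\mathbb{R}
\]
is plainly linear, and its continuity follows from the Cauchy--Schwarz inequality together with the Poincare inequality $\|y\|_{L^{2}}\leq\|y\|_{\thicksim\frac{\beta}{2}}$, which gives $|l(y)|\leq\|g\|_{L^{2}}\,\|y\|_{\thicksim\frac{\beta}{2}}$. Applying the Lax--Milgram theorem (indeed, since $a$ is exactly the inner product, the Riesz representation theorem already suffices) yields a unique $x_{g}\in D((-\Delta)^{\frac{\beta}{2}})$ with $a(x_{g},y)=l(y)$ for all $y$, that is,
\[
\int_{0}^{\pi}(-\Delta)^{\frac{\beta}{2}}x_{g}(t)\,(-\Delta)^{\frac{\beta}{2}}y(t)\,dt=\int_{0}^{\pi}g(t)y(t)\,dt
\]
for every $y\in D((-\Delta)^{\frac{\beta}{2}})$.

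Finally I would invoke Lemma \ref{slabe}, with $g$ as the right-hand side, to conclude that this weak solution $x_{g}$ in fact belongs to $D((-\Delta)^{\beta})$ and satisfies $(-\Delta)^{\beta}x_{g}=g$ in the strong $L^{2}$ sense; uniqueness in $D((-\Delta)^{\beta})$ is inherited from the uniqueness of $x_{g}$ furnished by Lax--Milgram, since Lemma \ref{slabe} sets up an equivalence between the two formulations. I do not anticipate any genuine obstacle here: the whole difficulty of Proposition \ref{bijectivity} lay in securing coercivity of $a$ in the presence of the perturbation $\Lambda$, and that difficulty disappears when $\Lambda=0$, because $a$ then coincides with the defining inner product of the space. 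The only step meriting a word of care is the continuity of $l$, which is precisely where the Poincare inequality (\ref{Poincare}) enters.
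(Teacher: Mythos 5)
Your proposal is correct and follows essentially the same route as the paper, which explicitly obtains the weak solution ``in the same way as in the proof of Proposition \ref{bijectivity}'' (i.e.\ the Lax--Milgram argument with $\Lambda\equiv 0$, where the form reduces to the inner product $\langle\cdot,\cdot\rangle_{\thicksim\frac{\beta}{2}}$) and then passes to the strong equation via Lemma \ref{slabe}. Your remark that the Riesz representation theorem already suffices is a nice simplification but does not change the substance of the argument.
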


Moreover, we have

\begin{lemma}
\label{compact}If $\beta>\frac{1}{2}$, then the operator
\[
\lbrack(-\Delta)^{\beta}]^{-1}:L^{2}\ni g\longmapsto x_{g}\in L^{2}%
\]
is compact, i.e. the image of any bounded set in $L^{2}$ is relatively compact
in $L^{2}$.
\end{lemma}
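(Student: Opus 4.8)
The plan is to exploit the explicit spectral description of $(-\Delta)^{\beta}$ recorded in Section 3 and to realize $T:=[(-\Delta)^{\beta}]^{-1}$ as a diagonal operator whose eigenvalues tend to zero, which I then approximate in operator norm by finite-rank operators. First I would note that, by the preceding lemma, $T$ is well defined on all of $L^{2}$. Expanding $g\in L^{2}$ in the hilbertian basis as $g=\sum_{j,i}g_{j,i}e_{j,i}$ with $g_{j,i}=\left\langle g,e_{j,i}\right\rangle _{L^{2}}$, and recalling that $(-\Delta)^{\beta}$ acts on $\sum_{j}a_{j}\sqrt{\tfrac{2}{\pi}}\sin jt$ by multiplying the projection onto $N(j^{2})$ by $(j^{2})^{\beta}$, the unique solution of $(-\Delta)^{\beta}x=g$ is $x_{g}=\sum_{j,i}(j^{2})^{-\beta}g_{j,i}e_{j,i}$. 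In particular $Te_{j,i}=(j^{2})^{-\beta}e_{j,i}$, so $T$ is the diagonal operator associated with the sequence $(j^{2})^{-\beta}=j^{-2\beta}$; the defining series converges in $L^{2}$ because $\sum_{j,i}(j^{2})^{-2\beta}|g_{j,i}|^{2}\leq\sum_{j,i}|g_{j,i}|^{2}=\left\Vert g\right\Vert _{L^{2}}^{2}$.

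Next I would introduce the finite-rank truncations $T_{N}g:=\sum_{j\leq N,\,i}(j^{2})^{-\beta}g_{j,i}e_{j,i}$, each with range in the finite-dimensional span of $\{e_{j,i}:j\leq N\}$. By orthonormality of the basis, $\left\Vert (T-T_{N})g\right\Vert _{L^{2}}^{2}=\sum_{j>N,\,i}(j^{2})^{-2\beta}|g_{j,i}|^{2}\leq(N+1)^{-4\beta}\left\Vert g\right\Vert _{L^{2}}^{2}$, so that $\left\Vert T-T_{N}\right\Vert _{\mathcal{L}(L^{2})}\leq(N+1)^{-2\beta}\rightarrow0$. Thus $T$ is the operator-norm limit of a sequence of finite-rank operators and is therefore compact, which is exactly the assertion.

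The only point requiring care is the identification of $T$ with the stated diagonal operator, that is, verifying that $x_{g}=\sum_{j,i}(j^{2})^{-\beta}g_{j,i}e_{j,i}$ indeed belongs to $D((-\Delta)^{\beta})$ and solves $(-\Delta)^{\beta}x=g$; this follows from the explicit action of $(-\Delta)^{\beta}$ together with the uniqueness supplied by the preceding lemma (note $\sum_{j,i}((j^{2})^{\beta})^{2}(j^{2})^{-2\beta}|g_{j,i}|^{2}=\left\Vert g\right\Vert _{L^{2}}^{2}<\infty$, so $x_{g}\in D((-\Delta)^{\beta})$ and $\left\Vert x_{g}\right\Vert _{\thicksim\beta}=\left\Vert g\right\Vert _{L^{2}}$). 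I observe that this argument in fact yields compactness for every $\beta>0$, so the hypothesis $\beta>\tfrac{1}{2}$ is not strictly needed here. Alternatively, one may phrase the same computation as a factorization: $T$ maps $L^{2}$ isometrically into $(D((-\Delta)^{\beta}),\left\Vert \cdot\right\Vert _{\thicksim\beta})$, and the inclusion $D((-\Delta)^{\beta})\subset L^{2}$ is compact by the very same tail estimate, whence $T$ is the composition of a bounded operator with a compact embedding.
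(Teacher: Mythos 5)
Your proof is correct, but it takes a genuinely different route from the paper's. The paper proves compactness by verifying the hypothesis of the Kolmogorov--Fr\'echet--Riesz criterion: it extends $x_{g}$ by zero to $\mathbb{R}$ and establishes the translation estimate $\left\Vert \tau_{h}\widetilde{x}_{g}-\widetilde{x}_{g}\right\Vert _{L^{2}}^{2}\leq const\cdot\left\vert h\right\vert$ uniformly over bounded $G\subset L^{2}$, by splitting the integral into three pieces and bounding the middle one through $\left\vert \sin j(t+h)-\sin jt\right\vert \leq 2\left\vert \sin\frac{jh}{2}\right\vert$ and $\sin^{2}\frac{jh}{2}\leq\frac{jh}{2}$; this is precisely where the series $\zeta(4\beta-1)$ appears and forces the hypothesis $\beta>\frac{1}{2}$. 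You instead diagonalize $[(-\Delta)^{\beta}]^{-1}$ in the hilbertian basis, with eigenvalues $j^{-2\beta}\rightarrow0$, and exhibit it as the operator-norm limit of finite-rank truncations via the tail bound $\left\Vert T-T_{N}\right\Vert \leq(N+1)^{-2\beta}$. Your argument is shorter, purely Hilbert-space-theoretic, comes with an explicit rate, and --- as you note --- yields the conclusion for every $\beta>0$; this is exactly the strengthening the paper's own Remark acknowledges is true but declines to prove, saying it would require ``additional considerations concerning the spectral representation of the inverse operator,'' which is what you have supplied. What the paper's route buys in exchange is independence from the explicit spectral decomposition of the inverse: the Kolmogorov--Fr\'echet--Riesz argument is a real-variable technique that transfers to settings where one controls translates of solutions without diagonalizing the operator, at the cost here of the artificial restriction $\beta>\frac{1}{2}$ (harmless for the paper's final theorem, which needs that hypothesis anyway). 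Your closing identification of the required verification --- that $\sum_{j,i}(j^{2})^{-\beta}g_{j,i}e_{j,i}$ lies in $D((-\Delta)^{\beta})$ and solves the equation, with uniqueness imported from the preceding lemma --- is the right point to flag, and your computation settles it.
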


\begin{proof}
Since $x_{(g_{1},...,g_{m})}=(x_{g_{1}},...,x_{g_{m}})$ for any $(g_{1}%
,...,g_{m})\in L^{2}$, therefore one can assume that $m=1$.

Let us recall the Kolmogorov-Frechet-Riesz theorem (cf. \cite{Brezis}): if
$\mathcal{F}$ is a bounded set in $L^{p}(\mathbb{R}^{n})$ ($1\leq p<\infty$)
and%
\begin{equation}
\underset{\varepsilon>0}{\forall}\;\underset{\delta>0}{\exists}\;\underset
{\left\vert h\right\vert <\delta}{\forall}\;\underset{f\in\mathcal{F}}%
{\forall}\;\left\Vert \tau_{h}f-f\right\Vert _{L^{p}(\mathbb{R}^{n}%
)}<\varepsilon\label{KFR}%
\end{equation}
(here, $\tau_{h}f(x)=f(x+h)$), then $\mathcal{F\mid}_{\Omega}$ is relatively
compact in $L^{p}(\Omega)$ for any measurable set $\Omega\subset\mathbb{R}%
^{n}$ with finite Lebesgue measure.

Let $G\subset L^{2}((0,\pi),\mathbb{R})$ be a set bounded by a constant $C$ in
$L^{2}((0,\pi),\mathbb{R})$. Consider a function
\[
g(t)=%
{\displaystyle\sum\nolimits_{j=1}^{\infty}}
b_{j}^{g}\sqrt{\frac{2}{\pi}}\sin jt\in G
\]
and the function
\[
x_{g}(t)=%
{\displaystyle\sum\nolimits_{j=1}^{\infty}}
a_{j}^{g}\sqrt{\frac{2}{\pi}}\sin jt
\]
(both equalities and convergences are meant in $L^{2}$ and, in view of the
Carleson theorem, a.e. on $(0,\pi)$). Since
\[
(-\Delta)^{\beta}x_{g}(t)=g(t)
\]
i.e.%
\[%
{\displaystyle\sum\nolimits_{j=1}^{\infty}}
(j^{2})^{\beta}a_{j}^{g}\sqrt{\frac{2}{\pi}}\sin jt=%
{\displaystyle\sum\nolimits_{j=1}^{\infty}}
b_{j}^{g}\sqrt{\frac{2}{\pi}}\sin jt,
\]
therefore%
\[
a_{j}^{g}=\frac{b_{j}^{g}}{(j^{2})^{\beta}}%
\]
for $j\in\mathbb{N}$. Now, we shall show that the set of functions
$\{\widetilde{x}_{g};\ g\in G\}$ where
\[
\widetilde{x}_{g}:\mathbb{R}\ni t\longmapsto\left\{
\begin{array}
[c]{ccc}%
x_{g}(t) & ; & t\in(0,\pi)\\
0 & ; & \text{otherwise}%
\end{array}
\right.  ,
\]
satisfies condition (\ref{KFR}) (of course, it is bounded in $L^{2}%
(\mathbb{R},\mathbb{R})$). Let us fix $0<h<\pi$ and consider the integral%
\begin{align}
&
{\displaystyle\int\nolimits_{-\infty}^{\infty}}
\left\vert \widetilde{x}_{g}(t+h)-\widetilde{x}_{g}(t)\right\vert ^{2}dt=%
{\displaystyle\int\nolimits_{-h}^{0}}
\left\vert \widetilde{x}_{g}(t+h)\right\vert ^{2}dt\nonumber\\
&  +%
{\displaystyle\int\nolimits_{0}^{\pi-h}}
\left\vert \widetilde{x}_{g}(t+h)-\widetilde{x}_{g}(t)\right\vert ^{2}dt+%
{\displaystyle\int\nolimits_{\pi-h}^{\pi}}
\left\vert \widetilde{x}_{g}(t+h)-\widetilde{x}_{g}(t)\right\vert
^{2}dt\nonumber\\
&  =%
{\displaystyle\int\nolimits_{0}^{h}}
\left\vert x_{g}(t)\right\vert ^{2}dt+%
{\displaystyle\int\nolimits_{0}^{\pi-h}}
\left\vert x_{g}(t+h)-x_{g}(t)\right\vert ^{2}dt+%
{\displaystyle\int\nolimits_{\pi-h}^{\pi}}
\left\vert x_{g}(t)\right\vert ^{2}dt. \label{sumatrzy}%
\end{align}
The first term of the above sum can be estimated in the following way (to
obtain third inequality we use H\"{o}lder inequality for series)%
\begin{multline*}%
{\displaystyle\int\nolimits_{0}^{h}}
\left\vert x_{g}(t)\right\vert ^{2}dt=%
{\displaystyle\int\nolimits_{0}^{h}}
\left\vert
{\displaystyle\sum\nolimits_{j=1}^{\infty}}
\frac{b_{j}^{g}}{(j^{2})^{\beta}}\sqrt{\frac{2}{\pi}}\sin jt\right\vert
^{2}dt\\
\leq\frac{2}{\pi}%
{\displaystyle\int\nolimits_{0}^{h}}
(%
{\displaystyle\sum\nolimits_{j=1}^{\infty}}
\frac{\left\vert b_{j}^{g}\right\vert }{(j^{2})^{\beta}})^{2}dt\leq\frac
{2}{\pi}h%
{\displaystyle\sum\nolimits_{j=1}^{\infty}}
\left\vert b_{j}^{g}\right\vert ^{2}%
{\displaystyle\sum\nolimits_{j=1}^{\infty}}
\frac{1}{(j^{2})^{2\beta}}\\
=\frac{2}{\pi}h\left\Vert g\right\Vert _{L^{2}}^{2}\zeta(4\beta)\leq\frac
{2}{\pi}C\zeta(4\beta)h.
\end{multline*}
In the same way one can estimate third term of (\ref{sumatrzy}).

For the second term, we have%
\begin{multline*}%
{\displaystyle\int\nolimits_{0}^{\pi-h}}
\left\vert x_{g}(t+h)-x_{g}(t)\right\vert ^{2}dt\\
=%
{\displaystyle\int\nolimits_{0}^{\pi-h}}
\left\vert
{\displaystyle\sum\nolimits_{j=1}^{\infty}}
\frac{b_{j}^{g}}{(j^{2})^{\beta}}\sqrt{\frac{2}{\pi}}(\sin j(t+h)-\sin
jt)\right\vert ^{2}dt\\
\leq%
{\displaystyle\int\nolimits_{0}^{\pi-h}}
(%
{\displaystyle\sum\nolimits_{j=1}^{\infty}}
\frac{\left\vert b_{j}^{g}\right\vert }{(j^{2})^{\beta}}\sqrt{\frac{2}{\pi}%
}\left\vert 2\sin\frac{jh}{2}\cos(jt+\frac{jh}{2})\right\vert )^{2}dt\\
\leq\frac{8}{\pi}%
{\displaystyle\int\nolimits_{0}^{\pi-h}}
(%
{\displaystyle\sum\nolimits_{j=1}^{\infty}}
\frac{\left\vert b_{j}^{g}\right\vert }{(j^{2})^{\beta}}\left\vert \sin
\frac{jh}{2}\right\vert )^{2}dt\leq\frac{8}{\pi}(\pi-h)%
{\displaystyle\sum\nolimits_{j=1}^{\infty}}
\left\vert b_{j}^{g}\right\vert ^{2}%
{\displaystyle\sum\nolimits_{j=1}^{\infty}}
\frac{\sin^{2}\frac{jh}{2}}{(j^{2})^{2\beta}}\\
\leq\frac{8}{\pi}(\pi-h)C%
{\displaystyle\sum\nolimits_{j=1}^{\infty}}
\frac{jh}{2(j^{2})^{2\beta}}\leq4Ch%
{\displaystyle\sum\nolimits_{j=1}^{\infty}}
\frac{1}{j^{4\beta-1}}=4C\zeta(4\beta-1)h.
\end{multline*}
If $-\pi<h<0$, we proceed in the same way.

Finally,%
\[
\left\Vert \tau_{h}f-f\right\Vert _{L^{p}(\mathbb{R}^{n})}\leq const\left\vert
h\right\vert
\]
for $\left\vert h\right\vert <\pi$. So, the set $\{\left.  \widetilde{x}%
_{g}\right\vert _{(0,\pi)};\ g\in G\}=\{x_{g};\ g\in G\}$ is relatively
compact in $L^{2}$. The proof is completed.
\end{proof}

Using the above lemma we obtain

\begin{lemma}
\label{weak conver}If $\beta>\frac{1}{2}$ and $x_{k}\rightharpoonup x_{0}$
weakly in $D((-\Delta)^{\beta})$, then $x_{k}\rightarrow x_{0}$ strongly in
$L^{2}$ and $(-\Delta)^{\beta}x_{k}\rightharpoonup(-\Delta)^{\beta}x_{0}$
weakly in $L^{2}$.
\end{lemma}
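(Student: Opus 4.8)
The plan is to prove the two assertions separately, treating the weak convergence of the Laplacians first, since the strong $L^{2}$ convergence is then deduced from it by compactness.

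First I would record the structural fact underlying everything. By the lemma preceding Lemma \ref{compact}, the operator $(-\Delta)^{\beta}:D((-\Delta)^{\beta})\rightarrow L^{2}$ is a bijection, and with respect to the norm $\left\Vert \cdot\right\Vert _{\thicksim\beta}$ it is an isometry; hence it is an isometric isomorphism of the Hilbert space $(D((-\Delta)^{\beta}),\left\langle \cdot,\cdot\right\rangle _{\thicksim\beta})$ onto $L^{2}$. By definition, $x_{k}\rightharpoonup x_{0}$ weakly in $D((-\Delta)^{\beta})$ means $\left\langle x_{k},y\right\rangle _{\thicksim\beta}\rightarrow\left\langle x_{0},y\right\rangle _{\thicksim\beta}$, that is $\left\langle (-\Delta)^{\beta}x_{k},(-\Delta)^{\beta}y\right\rangle _{L^{2}}\rightarrow\left\langle (-\Delta)^{\beta}x_{0},(-\Delta)^{\beta}y\right\rangle _{L^{2}}$, for every $y\in D((-\Delta)^{\beta})$. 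Since $(-\Delta)^{\beta}$ is surjective onto $L^{2}$, the elements $(-\Delta)^{\beta}y$ exhaust $L^{2}$; therefore $\left\langle (-\Delta)^{\beta}x_{k},w\right\rangle _{L^{2}}\rightarrow\left\langle (-\Delta)^{\beta}x_{0},w\right\rangle _{L^{2}}$ for every $w\in L^{2}$, which is precisely $(-\Delta)^{\beta}x_{k}\rightharpoonup(-\Delta)^{\beta}x_{0}$ weakly in $L^{2}$. This settles the second assertion.

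For the strong convergence, I would set $g_{k}:=(-\Delta)^{\beta}x_{k}$ and $g_{0}:=(-\Delta)^{\beta}x_{0}$, so that $g_{k}\rightharpoonup g_{0}$ weakly in $L^{2}$ by the step just completed, while $x_{k}=[(-\Delta)^{\beta}]^{-1}g_{k}$ and $x_{0}=[(-\Delta)^{\beta}]^{-1}g_{0}$ by uniqueness of solutions. Since $\beta>\frac{1}{2}$, Lemma \ref{compact} gives that $[(-\Delta)^{\beta}]^{-1}:L^{2}\rightarrow L^{2}$ is compact, and the claim follows from the standard fact that a compact linear operator carries weakly convergent sequences to norm-convergent ones. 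For completeness I would include the short argument rather than merely quote it: the weakly convergent sequence $(g_{k})$ is bounded, so by compactness the image $([(-\Delta)^{\beta}]^{-1}g_{k})$ is relatively compact in $L^{2}$; boundedness of $[(-\Delta)^{\beta}]^{-1}$ forces $[(-\Delta)^{\beta}]^{-1}g_{k}\rightharpoonup[(-\Delta)^{\beta}]^{-1}g_{0}=x_{0}$, so every norm-convergent subsequence has limit $x_{0}$, and since every subsequence admits a sub-subsequence converging to this same $x_{0}$, the whole sequence converges, $x_{k}\rightarrow x_{0}$ strongly in $L^{2}$.

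I do not expect a serious obstacle: once Lemma \ref{compact} is available, both assertions reduce to elementary Hilbert-space facts. The only point demanding care is the transfer of weak convergence through $(-\Delta)^{\beta}$, where surjectivity must be used to guarantee that the test functions $(-\Delta)^{\beta}y$ range over all of $L^{2}$; this is exactly what makes the isometric-isomorphism viewpoint the clean way to organize the argument.
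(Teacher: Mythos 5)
Your proof is correct and follows essentially the same route as the paper: weak convergence of $((-\Delta)^{\beta}x_{k})$ is transferred to $L^{2}$ via the boundedness of $(-\Delta)^{\beta}$ on $D((-\Delta)^{\beta})$, and the strong $L^{2}$ convergence then comes from Lemma \ref{compact} together with uniqueness of weak limits and the subsequence principle. The only cosmetic difference is that you derive the weak convergence through the isometric-isomorphism/surjectivity viewpoint, whereas the paper simply invokes weak-to-weak continuity of the two bounded linear maps $x\mapsto x$ and $x\mapsto(-\Delta)^{\beta}x$; both are equally valid.
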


\begin{proof}
From the continuity of the linear operators%
\[
D((-\Delta)^{\beta})\ni x\longmapsto x\in L^{2},
\]%
\[
D((-\Delta)^{\beta})\ni x\longmapsto(-\Delta)^{\beta}x\in L^{2}%
\]
it follows that $x_{k}\rightharpoonup x_{0}$ weakly in $L^{2}$ and
$(-\Delta)^{\beta}x_{k}\rightharpoonup(-\Delta)^{\beta}x_{0}$ weakly in
$L^{2}$. Lemma \ref{compact} implies that the sequence $(x_{k})$ contains a
subsequence $(x_{k_{i}})$ converging strongly in $L^{2}$ to a limit. Of
course, this limit is the function $x_{0}$, i.e. $x_{k_{i}}\rightarrow x_{0}$
strongly in $L^{2}$. Supposing contrary and repeating the above argumentation
we assert that $x_{k}\rightarrow x_{0}$ strongly in $L^{2}$.
\end{proof}

\begin{remark}
Lemmas \ref{compact} and Lemma \ref{weak conver} are valid for any $\beta>0$.
The proofs of such stronger results, in the case of bounded open set
$\Omega\subset\mathbb{R}^{n}$ ($n\geq1$), can be found in \cite{Id3}. We give
here weaker theorems for two reasons. First, to prove more general results (in
fact, a counterpart of Lemma \ref{compact} because the proof of Lemma
\ref{weak conver} remains unchanged) some additional considerations,
concerning the spectral representation of the inverse operator, are needed.
Second, due to the other assumptions (cf. Proposition \ref{bijectivity})
assumption $\beta>\frac{1}{2}$ in Theorem \ref{final} can not be omitted.
\end{remark}

The main tool that we shall use to prove that $\varphi$ satisfies (PS)
condition is the following lemma.

\begin{lemma}
If $\beta>\frac{1}{4}$, $f$ satisfies the growth condition%
\[
\left\vert f(t,x,u)\right\vert \leq a(t)\left\vert x\right\vert +b(t)\delta
(\left\vert u\right\vert )
\]
for $(t,x,u)\in(0,\pi)\times\mathbb{R}^{m}\times\mathbb{R}^{r}$, where $a,b\in
L^{2}$, $\delta:\mathbb{R}_{0}^{+}\rightarrow\mathbb{R}_{0}^{+}$ is a
continuous function and%
\begin{equation}
\sqrt{\frac{2}{\pi}\zeta(4\beta)}\left\Vert a\right\Vert _{L^{2}}<1,
\label{niekoer}%
\end{equation}
then, for any $u\in L_{r}^{\infty}$, the functional
\[
\varphi:D((-\Delta)^{\beta})\ni x\longmapsto\frac{1}{2}\left\Vert
F(x,u)\right\Vert _{L^{2}}^{2}\in\mathbb{R}%
\]
is coercive, i.e.%
\[
\left\Vert x\right\Vert _{\thicksim\beta}\rightarrow\infty\Longrightarrow
\varphi(x)\rightarrow\infty.
\]

\end{lemma}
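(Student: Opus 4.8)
The plan is to bound $\varphi$ from below by a quantity diverging with $\|x\|_{\thicksim\beta}$. Since $\varphi(x)=\tfrac12\|F(x,u)\|_{L^2}^2$, it suffices to show $\|F(x,u)\|_{L^2}\to\infty$ as $\|x\|_{\thicksim\beta}\to\infty$. First I would apply the reverse triangle inequality in $L^2$ to the two terms defining $F$:
\[
\|F(x,u)\|_{L^2}\geq\|(-\Delta)^{\beta}x\|_{L^2}-\|f(\cdot,x(\cdot),u(\cdot))\|_{L^2},
\]
and recall that, by the very definition of the norm on $D((-\Delta)^{\beta})$, one has $\|(-\Delta)^{\beta}x\|_{L^2}=\|x\|_{\thicksim\beta}$. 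Thus the first term on the right already produces the desired leading behaviour, and everything reduces to controlling the nonlinear term by a sublinear multiple of $\|x\|_{\thicksim\beta}$ plus a constant.

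For the nonlinear term, the growth hypothesis together with Minkowski's inequality gives
\[
\|f(\cdot,x,u)\|_{L^2}\leq\Big(\int_0^\pi a(t)^2|x(t)|^2\,dt\Big)^{1/2}+\Big(\int_0^\pi b(t)^2\delta(|u(t)|)^2\,dt\Big)^{1/2}.
\]
The second integral is a finite constant $C_u$ independent of $x$: since $u\in L_r^\infty$ is fixed and $\delta$ is continuous, $\delta(|u(\cdot)|)$ is essentially bounded, and $b\in L^2$. For the first integral I would factor out the sup norm of $x$ and invoke the embedding of Lemma \ref{zanurzenie2} (available because $\beta>\tfrac14$), which yields
\[
\Big(\int_0^\pi a(t)^2|x(t)|^2\,dt\Big)^{1/2}\leq\|x\|_{L_m^\infty}\|a\|_{L^2}\leq\sqrt{\tfrac{2}{\pi}\zeta(4\beta)}\,\|a\|_{L^2}\,\|x\|_{\thicksim\beta}.
\]

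Writing $\kappa=\sqrt{\tfrac{2}{\pi}\zeta(4\beta)}\,\|a\|_{L^2}$ and combining the three displays, I obtain
\[
\|F(x,u)\|_{L^2}\geq(1-\kappa)\|x\|_{\thicksim\beta}-C_u .
\]
Condition (\ref{niekoer}) is precisely the statement that $\kappa<1$, so the coefficient $1-\kappa$ is strictly positive, the right-hand side tends to $+\infty$ as $\|x\|_{\thicksim\beta}\to\infty$, and hence $\varphi(x)\to\infty$. The only delicate point is therefore the sign of $1-\kappa$: coercivity would fail if $\kappa$ were allowed to reach $1$, and the embedding constant $\sqrt{\tfrac{2}{\pi}\zeta(4\beta)}$ entering $\kappa$ is exactly the one supplied by Lemma \ref{zanurzenie2}, which is why the smallness assumption is stated in the form (\ref{niekoer}).
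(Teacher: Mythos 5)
Your argument is correct and is essentially the same as the paper's: reverse triangle inequality, Minkowski plus the growth bound, the $L^\infty$ embedding constant $\sqrt{\tfrac{2}{\pi}\zeta(4\beta)}$ from Lemma \ref{zanurzenie2}, and condition (\ref{niekoer}) to keep the coefficient $1-\kappa$ positive. The only (welcome) addition is your explicit justification that the constant $C_u$ is finite, which the paper leaves implicit.
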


\begin{proof}
We have%
\begin{align*}
\left\Vert F(x,u)\right\Vert _{L^{2}}  &  =\left\Vert (-\Delta)^{\beta
}x(t)-f(t,x(t),u(t))\right\Vert _{L^{2}}\\
&  \geq\left\Vert (-\Delta)^{\beta}x(t)\right\Vert _{L^{2}}-\left\Vert
f(t,x(t),u(t))\right\Vert _{L^{2}}.
\end{align*}
But%
\begin{multline*}
\left\Vert f(t,x(t),u(t))\right\Vert _{L^{2}}\leq(%
{\displaystyle\int\nolimits_{0}^{\pi}}
(a(t)\left\vert x(t)\right\vert +b(t)\delta(\left\vert u(t)\right\vert
))^{2}dt)^{\frac{1}{2}}\\
\leq(%
{\displaystyle\int\nolimits_{0}^{\pi}}
\left\vert a(t)\right\vert ^{2}\left\vert x(t)\right\vert ^{2}dt)^{\frac{1}%
{2}}+D\leq\left\Vert x\right\Vert _{\infty}\left\Vert a\right\Vert _{L^{2}%
}+D\\
\leq\sqrt{\frac{2}{\pi}\zeta(4\beta)}\left\Vert a\right\Vert _{L^{2}%
}\left\Vert (-\Delta)^{\beta}x\right\Vert _{L^{2}}+D
\end{multline*}
where $D=(%
{\displaystyle\int\nolimits_{0}^{\pi}}
\left\vert b(t)\right\vert ^{2}(\delta(\left\vert u(t)\right\vert
))^{2}dt)^{\frac{1}{2}}$. Thus,%
\begin{align*}
\left\Vert F(x,u)\right\Vert _{L^{2}}  &  \geq\left\Vert (-\Delta)^{\beta
}x\right\Vert _{L^{2}}-\sqrt{\frac{2}{\pi}\zeta(4\beta)}\left\Vert
a\right\Vert _{L^{2}}\left\Vert (-\Delta)^{\beta}x\right\Vert _{L^{2}}-D\\
&  =(1-\sqrt{\frac{2}{\pi}\zeta(4\beta)}\left\Vert a\right\Vert _{L^{2}%
})\left\Vert x\right\Vert _{\thicksim\beta}-D.
\end{align*}
It means that $\varphi$ is coercive.
\end{proof}

Now, we are in a position to prove that the functional $\varphi$ satisfies
(PS) condition. Namely, we have

\begin{proposition}
If $\beta>\frac{1}{2}$, $f$ and $f_{x}$ satisfy the growth conditions%
\[
\left\vert f(t,x,u)\right\vert \leq a(t)\left\vert x\right\vert +b(t)\delta
(\left\vert u\right\vert )
\]%
\[
\left\vert f_{x}(t,x,u)\right\vert \leq a(t)\gamma(\left\vert x\right\vert
)+b(t)\delta(\left\vert u\right\vert )
\]
for $(t,x,u)\in(0,\pi)\times\mathbb{R}^{m}\times\mathbb{R}^{r}$, where $a,b\in
L^{2}$ and $\gamma,\delta:\mathbb{R}_{0}^{+}\rightarrow\mathbb{R}_{0}^{+}$ are
continuous functions, and (\ref{niekoer}) holds true, then $\varphi$ (with any
fixed $u\in L_{r}^{\infty}$) satisfies (PS) condition.
\end{proposition}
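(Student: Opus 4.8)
The plan is to verify the (PS) condition by the standard scheme: show that a (PS) sequence is bounded, extract a weakly convergent subsequence, and then upgrade weak convergence to strong convergence in $D((-\Delta)^{\beta})$ using the information $\varphi'(x_k)\to 0$. Recall from Proposition \ref{Diff} that $\varphi$ is of class $C^{1}$ with
\[
\varphi'(x)h=\langle F(x,u),(-\Delta)^{\beta}h-f_{x}(\cdot,x,u)h\rangle_{L^{2}},\qquad h\in D((-\Delta)^{\beta}).
\]
First I would fix a (PS) sequence $(x_k)$, so $|\varphi(x_k)|\le M$ and $\varphi'(x_k)\to 0$ in the dual. Since $\|F(x_k,u)\|_{L^{2}}^{2}=2\varphi(x_k)$ is bounded, the coercivity lemma just established forces $\|x_k\|_{\thicksim\beta}$ to remain bounded. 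As $D((-\Delta)^{\beta})$ is a Hilbert space, I pass to a subsequence with $x_{k}\rightharpoonup x_{0}$ weakly in $D((-\Delta)^{\beta})$; Lemma \ref{weak conver} (where $\beta>\tfrac12$ enters) then gives $x_{k}\to x_{0}$ strongly in $L^{2}$ and $(-\Delta)^{\beta}x_{k}\rightharpoonup(-\Delta)^{\beta}x_{0}$ weakly in $L^{2}$.

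Next I would record the convergences of the Nemytskii terms. Because $\beta>\tfrac14$, the embedding $D((-\Delta)^{\beta})\subset L_{m}^{\infty}$ of Lemma \ref{zanurzenie2} makes $(x_k)$ bounded in $L^{\infty}$; passing to a further subsequence with $x_k\to x_0$ a.e., the growth bounds on $f,f_x$ together with continuity of $\gamma,\delta$ furnish $L^{2}$-dominating functions built from $a,b$. The dominated convergence theorem then yields, strongly in $L^{2}$,
\[
f(\cdot,x_k,u)\to f(\cdot,x_0,u),\qquad f_x(\cdot,x_k,u)x_k\to f_x(\cdot,x_0,u)x_0,
\]
and, for each fixed $h$, $f_x(\cdot,x_k,u)h\to f_x(\cdot,x_0,u)h$. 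Writing $w_k:=F(x_k,u)=(-\Delta)^{\beta}x_k-f(\cdot,x_k,u)$, boundedness of $\varphi(x_k)$ makes $(w_k)$ bounded in $L^{2}$, and combining the above gives $w_k\rightharpoonup w_0:=(-\Delta)^{\beta}x_0-f(\cdot,x_0,u)$ weakly in $L^{2}$.

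The heart of the argument is the upgrade to strong convergence of $(-\Delta)^{\beta}x_k$. Passing to the limit in $\varphi'(x_k)h\to 0$ for each fixed $h$ (weak convergence of $w_k$ against the fixed element $(-\Delta)^{\beta}h$, strong convergence of $f_x(\cdot,x_k,u)h$) produces the limit identity
\[
\langle w_0,(-\Delta)^{\beta}h\rangle_{L^{2}}-\langle w_0,f_x(\cdot,x_0,u)h\rangle_{L^{2}}=0\qquad\text{for all }h\in D((-\Delta)^{\beta}).
\]
On the other hand, testing the gradient at $h=x_k$, using $|\varphi'(x_k)x_k|\le\|\varphi'(x_k)\|_{\ast}\|x_k\|_{\thicksim\beta}\to 0$ together with the identity $\langle w_k,(-\Delta)^{\beta}x_k\rangle_{L^{2}}=\|w_k\|_{L^{2}}^{2}+\langle w_k,f(\cdot,x_k,u)\rangle_{L^{2}}$ and the strong convergences above, I obtain
\[
\lim_k\|w_k\|_{L^{2}}^{2}=\langle w_0,f_x(\cdot,x_0,u)x_0\rangle_{L^{2}}-\langle w_0,f(\cdot,x_0,u)\rangle_{L^{2}}.
\]
Evaluating the limit identity at $h=x_0$ gives $\langle w_0,(-\Delta)^{\beta}x_0\rangle_{L^{2}}=\langle w_0,f_x(\cdot,x_0,u)x_0\rangle_{L^{2}}$, whence $\|w_0\|_{L^{2}}^{2}=\langle w_0,f_x(\cdot,x_0,u)x_0\rangle_{L^{2}}-\langle w_0,f(\cdot,x_0,u)\rangle_{L^{2}}$, exactly the right-hand side above. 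Thus $\|w_k\|_{L^{2}}\to\|w_0\|_{L^{2}}$, and with $w_k\rightharpoonup w_0$ this forces $w_k\to w_0$ strongly in $L^{2}$.

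Finally, from $(-\Delta)^{\beta}x_k=w_k+f(\cdot,x_k,u)$ and the strong $L^{2}$-convergence of both terms, $(-\Delta)^{\beta}x_k\to(-\Delta)^{\beta}x_0$ strongly in $L^{2}$, i.e. $\|x_k-x_0\|_{\thicksim\beta}\to 0$; so $(x_k)$ has a subsequence converging in $D((-\Delta)^{\beta})$, which is the (PS) property. The main obstacle is precisely this last upgrade: weak convergence of $(-\Delta)^{\beta}x_k$ is automatic, but strong convergence is not, and the device that makes it work is matching $\lim\|w_k\|_{L^{2}}^{2}$ (read off by testing $\varphi'(x_k)$ against $x_k$) with $\|w_0\|_{L^{2}}^{2}$ (read off from the limit identity at $x_0$). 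I would also emphasize that the strong $L^{2}$-convergence of $x_k$ coming from the compact embedding of Lemma \ref{weak conver}, for which $\beta>\tfrac12$ is needed, is what lets every weak-times-strong pairing pass to the limit.
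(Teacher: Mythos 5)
Your argument is correct, but it reaches the strong convergence by a genuinely different device than the paper. The paper expands the quantity $(\varphi'(x_k)-\varphi'(x_0))(x_k-x_0)$, isolates the term $\Vert x_k-x_0\Vert_{\thicksim\beta}^{2}$ plus five error integrals $\psi_1,\dots,\psi_5$, shows each $\psi_i(x_{k_i})\to 0$ by the same weak--strong pairings you use, and concludes directly from $\varphi'(x_{k_i})(x_{k_i}-x_0)\to 0$ and $\varphi'(x_0)(x_{k_i}-x_0)\to 0$; it never needs to identify what equation the weak limit satisfies. You instead work with $w_k=F(x_k,u)$: you first derive the limit identity $\langle w_0,(-\Delta)^{\beta}h-f_x(\cdot,x_0,u)h\rangle_{L^2}=0$ for all $h$ (i.e.\ the weak Euler equation at $x_0$), then test $\varphi'(x_k)$ at $h=x_k$ to read off $\lim_k\Vert w_k\Vert_{L^2}^{2}$, match it with $\Vert w_0\Vert_{L^2}^{2}$ via the identity at $h=x_0$, and invoke the Radon--Riesz property (weak convergence plus convergence of norms implies strong convergence in a Hilbert space) before transferring strong convergence back to $(-\Delta)^{\beta}x_k$. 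Both routes rest on exactly the same preliminary ingredients -- coercivity for boundedness, Lemma \ref{weak conver} (where $\beta>\frac12$ enters), the $L^\infty$ bound from Lemma \ref{zanurzenie2}, and dominated convergence for the Nemytskii terms -- so neither is cheaper in hypotheses. The paper's computation is more mechanical (one identity, five terms, no limit equation needed); yours is arguably more transparent about \emph{why} strong convergence holds, at the cost of the extra step of establishing and exploiting the limit identity. Your proof is complete as sketched; just make sure, when writing it up, to state explicitly that the domination of $f_x(\cdot,x_k,u)x_k$ uses both the uniform $L^\infty$ bound on $(x_k)$ and the continuity of $\gamma$ on the corresponding compact range.
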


\begin{proof}
From Proposition \ref{Diff} it follows that $\varphi$ is of class $C^{1}$ and
its differential $\varphi^{\prime}(x):D((-\Delta)^{\beta})\rightarrow
\mathbb{R}$ is given by%
\begin{multline*}
\varphi^{\prime}(x)h\\
=%
{\displaystyle\int\nolimits_{0}^{\pi}}
((-\Delta)^{\beta}x(t)-f(t,x(t),u(t)))((-\Delta)^{\beta}h(t)-f_{x}%
(t,x(t),u(t))h(t))dt
\end{multline*}
for $h\in D((-\Delta)^{\beta})$. Consequently, for $x_{k},x_{0}\in
D((-\Delta)^{\beta})$, we have%
\[
(\varphi^{\prime}(x_{k})-\varphi^{\prime}(x_{0}))(x_{k}-x_{0})=\left\Vert
x_{k}-x_{0}\right\Vert _{\thicksim\frac{\beta}{2}}^{2}+%
{\displaystyle\sum\nolimits_{i=1}^{5}}
\psi_{i}(x_{k})
\]
where%
\[
\psi_{1}(x_{k})=%
{\displaystyle\int\nolimits_{0}^{\pi}}
(-\Delta)^{\beta}x_{k}(t)f_{x}(t,x_{k}(t),u(t))(x_{0}(t)-x_{k}(t))dt,
\]%
\[
\psi_{2}(x_{k})=%
{\displaystyle\int\nolimits_{0}^{\pi}}
(-\Delta)^{\beta}x_{0}(t)f_{x}(t,x_{0}(t),u(t))(x_{k}(t)-x_{0}(t))dt,
\]%
\[
\psi_{3}(x_{k})=%
{\displaystyle\int\nolimits_{0}^{\pi}}
f(t,x_{k}(t),u(t))f_{x}(t,x_{k}(t),u(t))(x_{k}(t)-x_{0}(t))dt,
\]%
\[
\psi_{4}(x_{k})=%
{\displaystyle\int\nolimits_{0}^{\pi}}
f(t,x_{0}(t),u(t))f_{x}(t,x_{0}(t),u(t))(x_{0}(t)-x_{k}(t))dt,
\]%
\begin{multline*}
\psi_{5}(x_{k})=%
{\displaystyle\int\nolimits_{0}^{\pi}}
(f(t,x_{0}(t),u(t))-f(t,x_{k}(t),u(t)))\\
((-\Delta)^{\beta}x_{k}(t)-(-\Delta)^{\beta}x_{0}(t))dt.
\end{multline*}

Now, let $(x_{k})$ be a (PS) sequence for $\varphi$. Since $\varphi$ is
coercive, therefore $(x_{k})$ is bounded in $D((-\Delta)^{\beta})$. So, one
can choose a subsequence $(x_{k_{i}})$ weakly converging in $D((-\Delta
)^{\beta})$ to some $x_{0}$. From Lemma \ref{weak conver} it follows that
$x_{k_{i}}\rightarrow x_{0}$ strongly in $L^{2}$ and $(-\Delta)^{\beta
}x_{k_{i}}(t)\rightharpoonup(-\Delta)^{\beta}x_{0}(t)$ weakly in $L^{2}$.
Since the sequence $(x_{k_{i}})$ is bounded in $D((-\Delta)^{\beta})$,
therefore it is bounded in $L_{m}^{\infty}$ and, consequently ($\beta>\frac
{1}{2}$), in $C$. Moreover, there exists a subsequence of the sequence
$(x_{k_{i}})$ (let us denote it by $(x_{k_{i}})$) converging to $x_{0}$
pointwise a.e. on $(0,\pi)$.

Term $\psi_{1}(x_{k_{i}})$ tends to zero. Indeed, functions $f_{x}(t,x_{k_{i}%
}(t),u(t))$, $k\in\mathbb{N}$, are equibounded on $(0,\pi)$ by a squared
integrable function. Functions $f_{x}(t,x_{k_{i}}(t),u(t))(x_{0}(t)-x_{m_{k}%
}(t))$ belong to $L^{2}$ and converge pointwise (a.e. on $(0,\pi)$) to zero
function. Moreover, they are equibounded on $(0,\pi)$ by a squared integrable
function. So, from the Lebesgue dominated convergence theorem it follows that
the sequence
\[
(f_{x}(t,x_{k_{i}}(t),u(t))(x_{0}(t)-x_{m_{k}}(t)))
\]
converges in $L^{2}$ to the zero function. Thus, in view of the weak
convergence of the sequence $((-\Delta)^{\beta}x_{k})$ to $(-\Delta)^{\beta
}x_{0}$ in $L^{2}$, $\psi_{1}(x_{k_{i}})\rightarrow0$.

Similarly, $\psi_{l}(x_{k_{i}})\rightarrow0$ for remaining $l$.

Finally, since
\[
\varphi^{\prime}(x_{k_{i}})(x_{k_{i}}-x_{0})\rightarrow0,
\]%
\[
\varphi^{\prime}(x_{0})(x_{k_{i}}-x_{0})\rightarrow0,
\]
therefore%
\[
\left\Vert x_{k_{i}}-x_{0}\right\Vert _{\thicksim\frac{\beta}{2}}%
^{2}\rightarrow0,
\]
i.e. $\varphi$ satisfies (PS) condition.
\end{proof}

\section{Final result}

Thus, we have proved

\begin{theorem}
\label{final}Assume that $\beta>\frac{1}{2}$, function $f$ is measurable in
$t\in(0,\pi)$, continuously differentiable in $(x,u)\in\mathbb{R}^{m}%
\times\mathbb{R}^{r}$ and%
\[
\left\vert f(t,x,u)\right\vert \leq a(t)\left\vert x\right\vert +b(t)\delta
(\left\vert u\right\vert )
\]%
\[
\left\vert f_{x}(t,x,u)\right\vert ,\left\vert f_{u}(t,x,u)\right\vert \leq
a(t)\gamma(\left\vert x\right\vert )+b(t)\delta(\left\vert u\right\vert )
\]
for $(t,x,u)\in(0,\pi)\times\mathbb{R}^{m}\times\mathbb{R}^{r}$, where $a,b\in
L^{2}$, $\gamma,\delta:\mathbb{R}_{0}^{+}\rightarrow\mathbb{R}_{0}^{+}$ are
continuous functions and%
\[
\sqrt{\frac{2}{\pi}\zeta(4\beta)}\left\Vert a\right\Vert _{L^{2}}<1.
\]
If, for any pair $(x,u)\in D((-\Delta)^{\beta})\times L_{r}^{\infty}$ one of
the following assumptions is satisfied

\begin{itemize}
\item[a)] $\left\Vert f_{x}(t,x(t),u(t))\right\Vert _{L_{m\times m}^{1}}%
<\frac{\pi}{2\zeta(2\beta)}$

\item[b)] $f_{x}(t,x(t),u(t))\leq0$ for $t\in(0,\pi)$ a.e.

\item[c)] $f_{x}(t,x(t),u(t))\in L_{m\times m}^{\infty}$ and $\left\Vert
f_{x}(t,x(t),u(t))\right\Vert _{\infty}<1$,
\end{itemize}

\noindent then, for any $u\in L_{r}^{\infty}$, there exists a unique solution
$x_{u}\in D((-\Delta)^{\beta})$ of problem (\ref{W}) and the mapping%
\[
\lambda:L_{r}^{\infty}\ni u\longmapsto x_{u}\in D((-\Delta)^{\beta})
\]
is continuously differentiable with the differential $\lambda^{\prime}(u)$ at
$u\in L_{r}^{\infty}$ such that, for any $v\in L_{r}^{\infty}$,%
\[
(-\Delta)^{\beta}(\lambda^{\prime}(u)v)(t)-f_{x}(t,x_{u}(t),u(t))(\lambda
^{\prime}(u)v)(t)=f_{u}(t,x_{u}(t),u(t))v(t)
\]
for $t\in(0,\pi)$ a.e.
\end{theorem}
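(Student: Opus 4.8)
The plan is to read problem (\ref{W}) as the operator equation $F(x,u)=0$ and to invoke the global implicit function theorem, Theorem \ref{gift2}, with $X=D((-\Delta)^{\beta})$, $U=L_{r}^{\infty}$ and $H=L^{2}$. Since $F(x,u)=(-\Delta)^{\beta}x-f(\cdot,x(\cdot),u(\cdot))$ is regarded as an element of $L^{2}$, the equality $F(x,u)=0$ is exactly the a.e. identity $(-\Delta)^{\beta}x(t)=f(t,x(t),u(t))$; hence a zero of $F(\cdot,u)$ in $D((-\Delta)^{\beta})$ is precisely a solution $x_{u}$ of (\ref{W}). The whole statement therefore reduces to checking the three hypotheses of Theorem \ref{gift2} and then transcribing its conclusion.

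First I would dispatch regularity. Because $\beta>\frac{1}{2}>\frac{1}{4}$ and the bounds assumed here on $f$, $f_{x}$, $f_{u}$ imply the growth condition (\ref{fwzrost1}) (taking, if necessary, a continuous majorant of the identity and of $\gamma$), Proposition \ref{Diff} gives that $F$ is of class $C^{1}$ and identifies its partial differentials as $F_{x}^{\prime}(x,u)h=(-\Delta)^{\beta}h-f_{x}(\cdot,x,u)h$ and $F_{u}^{\prime}(x,u)v=-f_{u}(\cdot,x,u)v$. Next I would establish the bijectivity requirement: fixing $(x,u)$ and putting $\Lambda(t):=f_{x}(t,x(t),u(t))$, each of the alternatives a), b), c) is exactly the matching hypothesis of Proposition \ref{bijectivity}, which (using $\beta>\frac{1}{2}$) yields that $F_{x}^{\prime}(x,u):D((-\Delta)^{\beta})\to L^{2}$ is bijective. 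Finally I would verify the Palais--Smale condition: the growth bounds on $f$ and $f_{x}$ together with inequality (\ref{niekoer}) are precisely the hypotheses of the last Proposition of the (PS) subsection, so for every fixed $u$ the functional $\varphi(x)=\frac{1}{2}\Vert F(x,u)\Vert_{L^{2}}^{2}$ satisfies (PS).

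With the three hypotheses verified, Theorem \ref{gift2} produces a unique map $\lambda:L_{r}^{\infty}\to D((-\Delta)^{\beta})$ of class $C^{1}$ with $F(\lambda(u),u)=0$ for all $u$; setting $x_{u}:=\lambda(u)$ delivers existence, uniqueness and $C^{1}$ dependence. For the differential I would start from formula (\ref{wzor}), apply $F_{x}^{\prime}(x_{u},u)$ to $\lambda^{\prime}(u)v$ to obtain $F_{x}^{\prime}(x_{u},u)(\lambda^{\prime}(u)v)=-F_{u}^{\prime}(x_{u},u)v$, and then substitute the explicit expressions for $F_{x}^{\prime}$ and $F_{u}^{\prime}$ from Proposition \ref{Diff}. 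This gives $(-\Delta)^{\beta}(\lambda^{\prime}(u)v)-f_{x}(\cdot,x_{u},u)(\lambda^{\prime}(u)v)=f_{u}(\cdot,x_{u},u)v$, the claimed identity.

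Because the genuine analysis has already been carried out in the component results, no substantively hard step remains; the only points that demand attention are bookkeeping ones --- confirming that the hypotheses stated here really do imply (\ref{fwzrost1}) and (\ref{niekoer}), that the single assumption $\beta>\frac{1}{2}$ clears the separate thresholds $\frac{1}{4}$ and $\frac{1}{2}$ required by the various lemmas and propositions, and that the minus sign carried by $F_{u}^{\prime}(x,u)v=-f_{u}(\cdot,x,u)v$ is propagated correctly through (\ref{wzor}) so that the final right-hand side is $+f_{u}(\cdot,x_{u},u)v$.
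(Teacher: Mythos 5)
Your proposal is correct and follows exactly the paper's route: the paper presents Theorem \ref{final} as the direct assembly of Proposition \ref{Diff}, Proposition \ref{bijectivity}, and the (PS) proposition, fed into Theorem \ref{gift2}, with the differential identity read off from formula (\ref{wzor}). Your bookkeeping remarks (that the stated bounds imply (\ref{fwzrost1}) via a continuous majorant, and that the sign of $F_{u}^{\prime}$ propagates to give $+f_{u}$ on the right-hand side) match what the paper leaves implicit.
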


\begin{remark}
Thus, for any $u\in L_{r}^{\infty}$, $v\in D((-\Delta)^{\beta})$ the function
$\lambda^{\prime}(u)v\in D((-\Delta)^{\beta})$ is a solution to the equation%
\[
(-\Delta)^{\beta}y(t)-f_{x}(t,x_{u}(t),u(t))y(t)=f_{u}(t,x_{u}%
(t),u(t))v(t),\ t\in(0,\pi)\text{ a.e.}%
\]

\end{remark}

\begin{example}
Let $\beta>\frac{1}{2},\ m=2,\ r=2$. It is easy to see that the function
\begin{align*}
f(t,x,u)  &  =(f^{1}(t,x_{1},x_{2},u_{1},u_{2}),f^{2}(t,x_{1},x_{2}%
,u_{1},u_{2}))\\
&  =(a\sin(x_{2})+t^{-\frac{1}{3}}e^{u_{1}},b\cos(x_{1})+tu_{2})
\end{align*}
satisfies assumptions of Theorem \ref{final} (including a)) with
\[
a(t)=\sqrt{a^{2}+b^{2}},\ \gamma(s)=\sqrt{2},\ b(t)=t^{-\frac{1}{3}%
}+t+\left\vert b\right\vert ,\ \delta(s)=e^{s},
\]
where $a,b\in\mathbb{R}$ are such that
\[
\sqrt{a^{2}+b^{2}}\leq\frac{1}{2\sqrt{2}\zeta(2\beta)}.
\]
Consequently, for any $u=(u_{1},u_{2})\in L_{2}^{\infty}$, there exists a
unique solution $x_{u}\in D((-\Delta)^{\beta})$ of the problem%
\[
\left\{
\begin{array}
[c]{l}%
(-\Delta)^{\beta}x_{1}(t)=a\sin(x_{2}(t))+t^{-\frac{1}{3}}e^{u_{1}(t)}\\
(-\Delta)^{\beta}x_{2}(t)=b\cos(x_{1}(t))+tu_{2}(t)
\end{array}
\right.
\]
for $t\in(0,\pi)$ a.e., and the mapping $\lambda(u)=(x_{u}^{1},x_{u}^{2})$ is
continuously differentiable with the differential $\lambda^{\prime}%
(u):L_{2}^{\infty}\rightarrow D((-\Delta)^{\beta})$ such that%
\begin{multline*}
(-\Delta)^{\beta}(\lambda^{\prime}(u)v)(t)-\left[
\begin{array}
[c]{cc}%
0 & a\cos((x_{u})_{2}(t))\\
-b\sin((x_{u})_{1}(t)) & 0
\end{array}
\right]  (\lambda^{\prime}(u)v)(t)\\
=\left[
\begin{array}
[c]{cc}%
t^{-\frac{1}{3}}e^{u_{1}(t)} & 0\\
0 & tu_{2}(t)
\end{array}
\right]  v(t),\ t\in(0,\pi)\text{ a.e.,}%
\end{multline*}
for any $v\in L_{2}^{\infty}$, i.e.%
\[
\left\{
\begin{array}
[c]{c}%
(-\Delta)^{\beta}(\left(  \lambda^{\prime}(u)v\right)  _{1})(t)=a\cos
((x_{u})_{2}(t))\left(  \lambda^{\prime}(u)v\right)  _{2}(t)+t^{-\frac{1}{3}%
}e^{u_{1}(t)}v_{1}(t)\\
(-\Delta)^{\beta}(\left(  \lambda^{\prime}(u)v\right)  _{1})(t)=-b\sin
((x_{u})_{1}(t))\left(  \lambda^{\prime}(u)v\right)  _{1}(t)+tu_{2}(t)v_{2}(t)
\end{array}
\right.
\]
for$\ t\in(0,\pi)$ a.e., and any $v=(v_{1},v_{2})\in L_{2}^{\infty}.$
\end{example}

\end{document}